\tikzset{>=stealth}
\tikzset{link/.style={column sep=1.8cm,row sep=0.16cm}}
	\def\MR#1{}
\newcommand{\bZ}{\mathbb{Z}}
\newcommand{\bN}{\mathbb{N}}
\newcommand{\bP}{\mathbb{P}}
\newcommand{\bF}{\mathbb{F}}
\newtheorem{theorem}{Theorem}[section]
\newtheorem{Teo}[theorem]{Theorem}
\newtheorem{Prop}[theorem]{Proposition}
\newtheorem{Lemma}[theorem]{Lemma}
\numberwithin{equation}{section}
\theoremstyle{definition}
\newtheorem{Def}[theorem]{Definition}
\newtheorem{Eg}[theorem]{Example}
\newtheorem{Rmk}[theorem]{Remark}
\begin{document}

\title{Transverse Lines to surfaces \\ over finite fields}
\author{Shamil Asgarli
	\and Lian Duan$^*$ 
	\and Kuan-Wen Lai}

\newcommand{\ContactInfo}{{
\tiny

\noindent S.~Asgarli,
\textsc{University of British Columbia,
Vancouver, V6T1Z2, Canada}\par\nopagebreak
\noindent\texttt{sasgarli@math.ubc.ca}
\noindent\textsc{ORCID: 0000-0003-3165-8297}

\noindent L.~Duan, 
\textsc{colorado state university,
Fort Collins, CO 80523, USA}\par\nopagebreak
\noindent\texttt{lian.duan@colostate.edu}
\noindent\textsc{ORCID: 0000-0003-4922-7831}

\noindent K.-W.~Lai,
\textsc{University of Massachusetts,
Amherst, MA 01003, USA}\par\nopagebreak
\noindent\texttt{lai@math.umass.edu}
}}

\begin{abstract}
We prove that if $S$ is a smooth reflexive surface in $\bP^3$ defined over a finite field $\bF_q$, then there exists an $\bF_q$-line meeting $S$ transversely provided that $q\geq c\deg(S)$, where $c=\frac{3+\sqrt{17}}{4}\approx 1.7808$. Without the reflexivity hypothesis, we prove the existence of a transverse $\bF_q$-line for $q\geq \deg(S)^2$.
\end{abstract}

\maketitle
\thispagestyle{titlepage}

\section*{Introduction}\label{sect:intro}

Given a smooth variety $X\subseteq\bP^{n}$ defined over an algebraically closed field $k$, a classical theorem of Bertini asserts that $X\cap H$ is smooth for a general hyperplane $H$ defined over $k$ \cite[Theorem~II.8.18]{Hartshorne}. The same proof in fact works for any infinite field $k$. When $k=\bF_q$ is a finite field, it is possible that $H\cap X$ is singular for every hyperplane $H$ defined over $\bF_q$. The following example is due to Nick Katz \cite{Katz-space-filling}:
\[
    S: X^q Y - X Y^q + Z^q W - Z W^q = 0
\]
defines a smooth surface in $\bP^3$ over $\bF_{q}$ such that each $\bF_q$-hyperplane is tangent to $S$; in particular, its hyperplane sections over $\bF_q$ are all singular (Example~\ref{katz-Example}).
While we cannot guarantee the existence of a smooth hyperplane section, Poonen \cite[Theorem 1.1]{Poonen-bertini} proved that there are plenty of hypersurfaces $Y\subseteq \bP^{n}$ such that $X\cap Y$ is smooth.

Another approach to remedy the original Bertini theorem in the case of finite fields is to investigate how large $q$ should be with respect to the invariants of the variety $X$ so that $X$ admits a favourable linear section. For instance, the first author \cite{Asgarli} proved that if $C\subseteq\bP^2$ is a smooth reflexive plane curve of degree $d$ over $\bF_q$ such that $q\geq d-1$, then there is an $\bF_q$-line which meets $C$ transversely. In this paper, we prove an analogous transversality result for surfaces.

\begin{Teo}\label{main-theorem}
Let $S\subseteq\bP^{3}$ be a smooth reflexive surface of degree $d$ defined over $\bF_q$. There exists an $\bF_q$-line $L\subseteq\bP^3$ meeting $S$ transversely provided that $q\geq cd$, where $c=\frac{3+\sqrt{17}}{4}\approx 1.780776$.
\end{Teo}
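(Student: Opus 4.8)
The plan is to reduce the statement to a point count on the Grassmannian $\bG(1,3)$ of lines in $\bP^3$. First I would record the geometric meaning of transversality: since $S$ is smooth, a line $L$ meets $S$ transversely if and only if the scheme $L\cap S$ consists of $d$ distinct points, equivalently $L$ is not contained in $S$ and is not tangent to $S$ at any $P\in L\cap S$ (i.e. $L\not\subseteq T_PS$). Thus the non-transverse lines form the ``tangent locus'' $\mathcal T\subseteq\bG(1,3)$ together with the finitely many lines lying on $S$. Since the total number of $\bF_q$-lines is $N=\lvert\bG(1,3)(\bF_q)\rvert=(q^2+1)(q^2+q+1)$, it suffices to prove the strict inequality $\lvert\mathcal T(\bF_q)\rvert + \#\{L\subseteq S\} < N$, so the whole problem becomes bounding the number of $\bF_q$-rational tangent lines from above.

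To count tangent lines I would use the incidence correspondence
\[
 I=\{(P,L)\in S\times\bG(1,3): P\in L,\ L\subseteq T_PS\},
\]
whose first projection realizes $I$ as a $\bP^1$-bundle over $S$ (the fiber over $P$ being the pencil of lines through $P$ inside the plane $T_PS$), so $\dim I=3$ and $\mathcal T=\mathrm{pr}_2(I)$ is a hypersurface in $\bG(1,3)$. This is exactly where reflexivity enters and is indispensable: for a reflexive $S$ the Gauss map is separable and a general tangent line has ordinary contact at a single point, so $\mathrm{pr}_2\colon I\to\mathcal T$ is generically injective; Katz's example shows that without reflexivity every line can be tangent and the count collapses. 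Reflexivity therefore lets me pass between $\lvert I(\bF_q)\rvert$ and $\lvert\mathcal T(\bF_q)\rvert$ up to a controlled, lower-order discrepancy coming from multiply-tangent and flex lines.

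Next I would estimate $\lvert\mathcal T(\bF_q)\rvert$ by splitting the tangent lines according to whether they are tangent at an $\bF_q$-point of $S$. Lines tangent at some $P\in S(\bF_q)$ are accounted for by the rational fibers of $I\to S$: each contributes the $q+1$ rational lines of the pencil, giving at most $(q+1)\lvert S(\bF_q)\rvert$ such pairs, and here I would invoke Serre's bound $\lvert S(\bF_q)\rvert\le dq^2+q+1$ for the degree-$d$ surface $S$ (valid since $q\ge cd>d$). The remaining tangent lines are tangent only at points not defined over $\bF_q$; these are governed by their conjugate tangency points (a Galois orbit lying on $L\cap S$), and I would bound their number through the geometry of the dual surface $S^\ast$ together with the fact that such a line still meets $S$ in a non-reduced $\bF_q$-divisor of degree $d$, which limits how many can occur among the $q+1$ planes through a line. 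Combining the two contributions with the bound on lines contained in $S$, the inequality $\lvert\mathcal T(\bF_q)\rvert+\#\{L\subseteq S\}<N$ reduces, after discarding lower-order terms, to $2q^2-3dq-d^2>0$; since $c=\frac{3+\sqrt{17}}{4}$ is the positive root of $2c^2-3c-1=0$, this is precisely the hypothesis $q\ge cd$, which closes the argument.

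The main obstacle I anticipate is the tight bookkeeping required to reach the sharp constant rather than a weaker bound such as $q>d$ or $q>d^2$. The naive estimate $(q+1)\lvert S(\bF_q)\rvert$ both overcounts lines tangent at several rational points and ignores lines whose tangency points are conjugate, so the delicate point is to control these two opposite errors simultaneously and show that together they affect only the coefficients of $q^3$ and $q^2$ in the expected way. Making the dependence on $d$ explicit in the contribution of non-rational tangency---while using reflexivity to rule out the pathological behavior of the Gauss map---is what determines whether the final quadratic is $2q^2-3dq-d^2$ (yielding $c\approx1.78$) or something weaker.
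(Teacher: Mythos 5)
Your outer framework does match the paper's: compare the number of tangent $\bF_q$-lines with $\#\operatorname{Gr}(2,4)(\bF_q)=(q^2+1)(q^2+q+1)$, split tangent lines into those tangent at a rational point of $S$ (bounded by $(q+1)\cdot\#S(\bF_q)$ together with a Serre/Homma--Kim type point count; the paper uses $\#S(\bF_q)\le(q+1)(qd-q+1)$) and those tangent only at non-rational points, and reduce to the positivity of a quartic whose binding condition at $q=xd$ is $2x^2-3x-1\ge 0$. But at the decisive step --- bounding the second class, which the paper calls \emph{special tangent lines} --- your proposal contains no argument. ``The geometry of the dual surface $S^\ast$'' and ``limits how many can occur among the $q+1$ planes through a line'' name no mechanism producing a bound of size $\tfrac12 dq^3+O(q^2)$, which is what the constant $c=\frac{3+\sqrt{17}}{4}$ requires. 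Your appeal to reflexivity is also off target: generic injectivity of $\mathrm{pr}_2\colon I\to\mathcal{T}$ over $\overline{\bF_q}$ cannot relate $\#I(\bF_q)$ to $\#\mathcal{T}(\bF_q)$, because a special tangent line is precisely an $\bF_q$-point of $\mathcal{T}$ with \emph{no} $\bF_q$-point of $I$ above it --- rational points of an image need not lift, and injectivity says nothing about this failure. Nor can degree-based counting of the hypersurface $\mathcal{T}\subseteq\operatorname{Gr}(2,4)$ rescue you: its Pl\"ucker degree is of order $d^2$ (classically $d(d-1)$), so a crude point count gives $\#\mathcal{T}(\bF_q)=O(d^2q^3)$, which forces $q\gtrsim d^2$ --- the level of the paper's Theorem~\ref{Teo:any-smooth-surface}, not the linear bound claimed here.

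What the paper actually does at this step is the construction your sketch lacks: the Frobenius-twisted auxiliary surfaces $S_{1,0}$, $S_{0,1}$, $S_{2,0}$ (of degrees $q+d-1$, $qd-q+1$, $q^2+d-1$), modeled on St\"ohr--Voloch. If $L$ is a special tangent at $P$, applying $\Phi$ repeatedly to $P\in L\subseteq T_PS$ gives $\Phi^{r}(P)\in L\subseteq T_{\Phi^{r}(P)}S$ for all $r$, so every tangency point lies in $\Pi=S\cap S_{1,0}\cap S_{0,1}\cap S_{2,0}$; since $P\neq\Phi(P)$ and both lie on $L$, each special tangent consumes at least two points of $\Pi$, and distinct special tangents consume disjoint sets. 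Reflexivity enters \emph{only} to guarantee that $S$ is Frobenius classical (Theorem~\ref{Teo:Frob:reflexive}, via Lemma~\ref{Prop:Frob-non-cl-Hessian-0}: Frobenius non-classicality forces $\det(H_F)\equiv 0$ on $S$, while separability of the Gauss map forbids this), which makes $S\cap S_{1,0}$ a curve; one then proves that the $1$-dimensional part $\Pi_1$ of $\Pi$ is a union of $\bF_q$-lines (collinearity of Galois conjugates plus Zak's finiteness of the Gauss map, Lemma~\ref{Lemma:weird_curve}), so the tangency points in fact lie in the finite part $\Pi_0$, and a B\'ezout estimate gives $\#\Pi_0(\overline{\bF_q})\le d(q+d-1)(q^2+d-1)$, hence at most half that many special tangents. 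Added to the rational-tangent term, this produces the coefficient $\tfrac12(3d-4)$ of $q^3$ and the quadratic $2x^2-3x-1$ --- exactly the inequality you wrote down, but you back-solved it from the known constant rather than deriving it. Without a substitute of comparable strength for the special-tangent bound, your argument does not close.
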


Recall that a line $L$ meets a surface $S$ \emph{transversely} if the intersection $S\cap L$ consists of $d=\deg(S)$ distinct geometric points. The reflexivity of a surface $S$ is a technical hypothesis needed to exclude pathological examples in characteristic $p>0$. We will review the relevant definitions in Section~\ref{section:Frobenius-classical}.

The lower bound in Theorem~\ref{main-theorem} may be improved or weakened under different hypotheses.
In Section~\ref{section:special_case}, we first prove the theorem in a special case, where the hypothesis is easy to state, and no knowledge of reflexivity is required.
Furthermore, the proof in the special case yields a sharper bound $q\geq c_0 d$ for some algebraic number $c_0\approx 1.537$, contains the key strategy to be reused in the other cases, and motivates the definition of the auxiliary surfaces introduced later in Section~\ref{subsect:auxillarySurfaces}.

We prove Theorem~\ref{main-theorem} in Section~\ref{section:general_case} under a slightly more general setup, namely, for \emph{Frobenius classical} surfaces.
We also prove a version of the theorem in Section~\ref{section:any-smooth-surface} for all smooth surfaces at the cost of a weaker bound $q\geq d^2$.
In Section~\ref{section:Frobenius-classical}, we show that reflexive surfaces are Frobenius classical; the results in this last section are valid for any hypersurface.

\medskip

The following example provides evidence that the condition $q\geq d-1$ is necessary to guarantee the existence of a transverse line; so our linear bound $q\geq cd$ is tight up to the multiplicative constant.

\begin{Eg}\label{counter-example}
Consider a surface $S\subseteq\bP^3$ defined by the polynomial
\begin{align*}
& L_{1}(X^q Y - X Y^q) + L_{2}(X^q Z - X Z^q) + L_{3}(X^q W - X W^q) \\ 
& + L_{4}(Y^q Z - Y Z^q) + L_{5}(Y^q W - Y W^q) + L_{6}(Z^q W - Z W^q)
\end{align*}
where $L_1, ..., L_6 \in \bF_q[X, Y, Z, W]$ are linear forms. 
This surface has degree $d = q+2$, and it is \emph{space-filling}, i.e. $S(\bF_q)=\bP^3(\bF_q)$. For each $\bF_q$-line $L\subseteq\bP^3$, either $L$ is contained in $S$, or $S\cap L$ contains $q+2$ points counted with multiplicity. In the latter case, $S\cap L$ already contains $q+1$ distinct point of $L(\bF_q)$ as $S$ is space-filling, so the extra intersection multiplicity accounts for tangency at one of the $\bF_q$-points. Thus, each line $L\subseteq\bP^3$ defined over $\bF_q$ is tangent to $S$ at some point.

In this example, one expects to get smooth surfaces by choosing $L_1, ..., L_6$ carefully. Indeed, computations in Macaulay2 \cite{M2} confirm the existence of such surfaces over $\bF_p$ for primes $p\leq 31$. However, we do not have a proof of this assertion in general. In any case, it produces a degree $d$ surface $S\subseteq \bP^3$ over $\bF_q$ satisfying $q=d-2$ such that $S$ admits no transverse $\bF_q$-lines.
\end{Eg}

\subsection*{Conventions}
We will assume that the characteristic of the field is $p\neq 2$. Some of the results, such as Theorem~\ref{Thm:special_case}, holds for $p=2$ but other concepts such as reflexivity is more delicate in this case.

\subsection*{Acknowledgements}
We thank Brendan Hassett for the support and valuable suggestions. We thank the referee for the comments on the manuscript. Research by the first author was partially supported by funds from NSF Grant DMS-1701659.

\section{Existence of transverse lines: special case}\label{section:special_case}

In this section, we prove Theorem~\ref{main-theorem} in a special case.
Let $S\subseteq\bP^3$ be a smooth surface defined by a degree $d$ homogeneous polynomial
\[
    F = F(X_0,X_1,X_2,X_3)\in\bF_q[X_0,X_1,X_2,X_3].
\]
For the sake of brevity,
we denote $F_{X_i} \colonequals \frac{\partial F}{\partial X_i}$.
Using the Frobenius morphism $\Phi\colon\bP^3\rightarrow\bP^3$,
which is defined as
\[
    \Phi\left([X_0,X_1,X_2,X_3]\right) = [X_0^q, X_1^q, X_2^q, X_3^q],
\]
we denote
\begin{align*}
    F_{X_i}^{(q)}(X_0, X_1, X_3, X_4)
    &\colonequals F_{X_i}\circ\Phi(X_0, X_1, X_3, X_4)\\
    &\;= F_{X_i}(X_0^q, X_1^q, X_2^q, X_3^q).
\end{align*}
With this notation,
we construct two polynomials from $F$ by
\begin{align*}
    & F_{1,0} \colonequals X_0^qF_{X_0} + X_1^qF_{X_1} + X_2^qF_{X_2} + X_3^qF_{X_3}\\
    & F_{0,1} \colonequals X_0F_{X_0}^{(q)} + X_1F_{X_1}^{(q)} + X_2F_{X_2}^{(q)} + X_3F_{X_3}^{(q)},
\end{align*}
and let
\[
    S_{1,0} \colonequals \{F_{1,0} = 0\}
    \quad\text{and}\quad
    S_{0,1} \colonequals \{F_{0,1} = 0\}.
\]
These surfaces are special cases of the \emph{auxiliary surfaces} associated to $S$ to be introduced in Section~\ref{subsect:auxillarySurfaces}.

\begin{Teo}\label{Thm:special_case}
Let $S\subseteq\bP^3$ be a smooth surface of degree $d$ defined over $\bF_q$, and let $S_{1,0}, S_{0,1}\subseteq\bP^3$ be the auxiliary surfaces defined as above.
Suppose that $S$, $S_{1,0}$, and $S_{0,1}$ intersect in a $0$-dimensional scheme.
Then there exists an $\bF_q$-line meeting $S$ transversely if $q\geq cd$,
where $c\approx1.536974$ is the real root of the polynomial $2x^3-2x^2-x-1$.
More precisely, without the assumption $q\geq cd$, the number of transverse $\bF_q$-lines is at least
\[
    q^4 - (d-2)q^3 - \frac{1}{2}\left[
    (d^2 - 5)q^2
    + (d^3 - 2d^2 + 4d - 4)q
    + (d^2 - 3)
    \right].
\]
\end{Teo}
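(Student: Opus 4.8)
The plan is to bound the number of non-transverse (``bad'') $\bF_q$-lines and show it is strictly smaller than the total number of $\bF_q$-lines in $\bP^3$, namely $(q^2+1)(q^2+q+1)=q^4+q^3+2q^2+q+1$. A line $L\not\subseteq S$ is bad exactly when $F|_L$ has a repeated root, i.e.\ $L$ is tangent to $S$ at some geometric point $P$, which means $P\in S\cap L$ and $L\subseteq T_P S$. The role of the auxiliary surfaces is to translate tangency into incidence conditions in $\bP^3$: one checks directly that $F_{1,0}(P)=0$ iff $\Phi(P)\in T_P S$, and $F_{0,1}(P)=0$ iff $P\in T_{\Phi(P)}S$. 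A first consequence, via Euler's relation, is that every $P\in S(\bF_q)$ satisfies $F_{1,0}(P)=F_{0,1}(P)=d\,F(P)=0$, so that $S(\bF_q)\subseteq S\cap S_{1,0}\cap S_{0,1}$. Under the hypothesis that this triple intersection is $0$-dimensional, this already forces $S\not\subseteq S_{1,0}$ and $S\not\subseteq S_{0,1}$, so each of $S\cap S_{1,0}$ and $S\cap S_{0,1}$ is a genuine curve, and it allows one to bound $|S(\bF_q)|$ through the curve $S\cap S_{1,0}$.

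I would then classify the bad lines according to the smallest field of definition of a tangency point, writing each $\bF_q$-line as $L=\overline{P\,\Phi(P)}$ for some $P\in\bP^3(\bF_{q^2})\setminus\bP^3(\bF_q)$. (i) Lines tangent at an $\bF_q$-point $P$ lie in the pencil of lines through $P$ inside $T_P S$, giving at most $q+1$ lines per point, hence at most $|S(\bF_q)|\,(q+1)$ lines in total; here I would bound $|S(\bF_q)|$ by the number of $\bF_q$-points on the curve $S\cap S_{1,0}$ of degree $d(q+d-1)$, and this family produces the leading correction of order $q^3$. (ii) Lines tangent at a conjugate pair $\{P,\Phi(P)\}$ with $P\in\bF_{q^2}\setminus\bF_q$: here $L\subseteq T_P S\cap T_{\Phi(P)}S$ forces $F_{1,0}(P)=F_{0,1}(P)=0$, so $P\in(S\cap S_{1,0}\cap S_{0,1})(\bF_{q^2})$, and conversely each such conjugate pair yields one line; by B\'ezout this family has size at most $\tfrac12\,d(q+d-1)\bigl(q(d-1)+1\bigr)$, contributing to the $q^2$ and lower terms. (iii) The remaining bad lines---those tangent only at points of degree $\ge 3$, together with the finitely many lines contained in $S$---I would argue affect only lower-order terms, using that higher tangency forces the intersection multiplicity, and hence $d$, to be large and that a smooth surface contains few lines.

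Summing the three contributions and subtracting from $q^4+q^3+2q^2+q+1$ yields a lower bound for the number of transverse lines of the stated shape $q^4-(d-2)q^3-\tfrac12[\cdots]$. The threshold $q\ge cd$ is precisely the regime in which this quartic in $q$ is positive: substituting $q=cd$ and comparing the leading terms in $d$ reduces positivity to $2c^3-2c^2-c-1\ge 0$, whose real root is exactly the constant $c=\tfrac16\bigl(2+\sqrt[3]{80-30\sqrt6}+\sqrt[3]{80+30\sqrt6}\bigr)$ appearing in the statement.

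The hard part is family (i): extracting the \emph{sharp} coefficient of $q^3$ requires a careful estimate of $|S(\bF_q)|$ through the auxiliary curve, combined with control of lines tangent at several rational points so as not to over-count. The $0$-dimensionality hypothesis is what guarantees that $S\cap S_{1,0}$, $S\cap S_{0,1}$, and their triple intersection all have the expected dimension, so that B\'ezout applies with the degrees above; tracking the exact lower-order terms---the $q^2$, $q$, and constant contributions, including intersection multiplicities at points of $S\cap S_{1,0}\cap S_{0,1}$ and the lines lying on $S$---is then the remaining bookkeeping.
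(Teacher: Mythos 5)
Your overall strategy coincides with the paper's: compare the number of tangent $\bF_q$-lines with the total $(q^2+1)(q^2+q+1)$, split tangent lines according to whether a tangency point is rational, and control the non-rational ones through the triple intersection $\Gamma=S\cap S_{1,0}\cap S_{0,1}$ via B\'ezout. However, your family (iii) is a genuine gap. Lines tangent to $S$ only at points of degree $\geq 3$ are special tangent lines like any other, and the justification you offer (higher-order tangency forces $d$ to be large; a smooth surface contains few lines) at best constrains \emph{when} such lines can exist, not \emph{how many} there are; it gives no bound. The correct treatment, which is the paper's Lemma~\ref{Lemma:spTanToTangency_Special}, handles all non-rational tangencies uniformly: if $L$ is an $\bF_q$-line tangent at a non-rational point $P$, applying Frobenius to $P\in L\subseteq T_PS$ yields $\Phi(P)\in L\subseteq T_{\Phi(P)}S$, so every tangency point of $L$ lies in $\Gamma(\overline{\bF_q})\setminus\Gamma(\bF_q)$, each such $L$ owns at least the two distinct points $P,\Phi(P)$, and distinct lines own disjoint sets (two lines through both $P$ and $\Phi(P)$ coincide). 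This bounds \emph{all} special tangents at once by $\tfrac12\#\Gamma(\overline{\bF_q})$; your cases (ii) and (iii) should be merged under this single argument. As written, your case (ii) already spends the entire B\'ezout budget $\tfrac12 d(q+d-1)(qd-q+1)$ on degree-$2$ points and leaves nothing with which to absorb case (iii).

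Even with (iii) repaired, your bounds cannot produce the stated polynomial or the constant $c\approx 1.537$; they would prove a weaker theorem. The paper's coefficients rely on two points absent from your proposal. First, Homma's bound $\#S(\bF_q)\leq (d-1)(q^2+1)$ from (\ref{HommaBound}), which is applicable because the $0$-dimensionality hypothesis forces $S$ to contain \emph{no} $\bF_q$-line at all (any $\bF_q$-line in $S$ lies in every auxiliary surface by Lemma~\ref{Lemma:FqLineInAuxillary}, so it would lie in $\Gamma$); you never make this observation, and your substitute---bounding $\#S(\bF_q)$ by the $\bF_q$-points of the curve $S\cap S_{1,0}$ of degree $d(q+d-1)$---is strictly weaker: any elementary bound for a possibly reducible, possibly line-containing space curve of that degree is of size about $d(q+d-1)(q+1)=dq^2+d^2q+\cdots$, exceeding Homma's bound by terms of order $q^2+d^2q$. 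Second, the paper exploits $\Gamma(\bF_q)=S(\bF_q)$ to subtract $\#S(\bF_q)/2$ from the special-tangent count. Feeding your cruder estimate through the final computation with $q=xd$ changes the leading ($d^4$) condition from $2x^3-2x^2-x-1\geq 0$ (root $\approx 1.537$) to roughly $2x^3-2x^2-3x-1\geq 0$ (root $\approx 1.92$), so your route yields a threshold near $q\geq 1.92\,d$ and a different error polynomial. Your final sentence asserting the reduction to $2c^3-2c^2-c-1\geq 0$ is therefore not something your own estimates deliver; it presupposes exactly the sharper bookkeeping above.
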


\subsection{Main ingredients in the proof}

We prove the existence of an $\bF_q$-line transverse to $S$ by comparing the number of $\bF_q$-lines tangent to $S$ to the number of $\bF_q$-lines in $\bP^3$, which is given by 
\begin{equation}\label{numberOfFqLines}
    \#\operatorname{Gr}(2,4)(\bF_q) = (q^2+1)(q^2+q+1).
\end{equation}
Note that the latter quantity is a degree $4$ polynomial in $q$.
We will show that the previous one grows at a rate no greater than a degree $3$ polynomial in $q$.
Therefore, we can find a transverse line when $q$ is large enough compared to $\deg(S)$, and the dependency between $q$ and $\deg(S)$ can be analyzed by comparing the two polynomials.

Recall that a line $L\subseteq\bP^3$ is tangent to $S$ at $P$ if and only if
\[
	P\in L\subseteq T_PS.
\]
Here we consider $T_PS$ as a hyperplane in the ambient $\bP^3$.
In order to estimate the number of $\bF_q$-lines tangent to $S$,
we divide them into two different types.

\begin{Def}\label{def:rationalSpecialTangentLines}
Let $L\subseteq\bP^3$ be an $\bF_q$-line tangent to $S$.
We call $L$ a \emph{rational tangent line} if it is tangent to $S$ at some $\bF_q$-point $P\in S$.
Otherwise, we call $L$ a \emph{special tangent line}.
\end{Def}

Estimating the number of special tangent lines is subtle, which will be investigated in Section~\ref{subsect:estimateSpTan_Special}.
On the other hand, the number of rational tangent lines is easy to estimate.
Indeed, let $L$ be an $\bF_q$-line tangent to $S$ at $P\in S(\bF_q)$.
Then $L$ must be one of the $q+1$ lines in $T_PS\cong\bP^2$ defined over $\bF_q$ and passing through $P$.
Therefore, the total number of rational tangent lines is bounded above as follows:
\begin{equation}\label{estimateRats}
    \#\{\text{rational tangents}\}
    \leq \#S(\bF_q)\cdot(q+1).
\end{equation}

Another ingredient in the proof is the bound on $\#S(\bF_q)$ given by Homma~\cite[Theorem~1.1]{Homma-lines}
\begin{equation}\label{HommaBound}
    \#S(\bF_q) \leq (d-1)(q^2+1),
\end{equation}
which holds whenever \emph{$S$ contains no $\bF_q$-line}.
In our situation, there is no $\bF_q$-line in $S$ since any such line is contained in both $S_{1,0}$ and $S_{0,1}$ by Lemma~\ref{Lemma:FqLineInAuxillary}.
But this is not allowed as $S\cap S_{1, 0}\cap S_{0, 1}$ is a $0$-dimensional scheme by hypothesis.

\subsection{Estimate for the number of special tangent lines}
\label{subsect:estimateSpTan_Special}

From the definitions of $S_{1,0}$ and $S_{0,1}$, it is straightforward to verify that
\begin{equation}\label{geometricAuxiliary}
\begin{aligned}
    (S\cap S_{1,0})(\overline{\bF_q}) &=
    \left\{
        P\in S(\overline{\bF_{q}}): \Phi(P)\in T_{P}S
    \right\},\\
    (S\cap S_{0,1})(\overline{\bF_q}) &=
    \left\{
        P\in S(\overline{\bF_{q}}): P\in T_{\Phi(P)}S
    \right\}.
\end{aligned}
\end{equation}
Consider the intersection
\[
    \Gamma \colonequals S\cap S_{1,0}\cap S_{0,1}.
\]
Then $\Gamma(\overline{\bF_q})$ is a finite set by our hypothesis. Note that $\Gamma(\bF_q) = S(\bF_q)$.

\begin{Lemma}\label{Lemma:spTanToTangency_Special}
For each special tangent line $L\subseteq\bP^3$,
we define
\[
    \mathcal{P}_{L} \colonequals
    \left\{
        P\in S(\overline{\bF_{q}}) : P\in L\subseteq T_{P}S
    \right\}
\]
to be the set of the points of tangency.
Then
\begin{enumerate}
    \item\label{spTanToTangency:inGamma}
    $\mathcal{P}_{L}\subseteq\Gamma(\overline{\bF_q})\setminus \Gamma(\bF_q)$, and
    \item\label{spTanToTangency:disjoint}
    $\mathcal{P}_{L}\cap\mathcal{P}_{L'}\neq\emptyset$ implies $L = L'$.
\end{enumerate}
\end{Lemma}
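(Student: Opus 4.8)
The plan is to verify the two claims by translating the geometric tangency conditions defining $\mathcal{P}_{L}$ into the defining equations of $S_{1,0}$ and $S_{0,1}$, using two elementary features of the $q$-power Frobenius $\Phi$. The first is that an $\bF_q$-line is stable under $\Phi$: since $L$ is cut out by linear forms with coefficients in $\bF_q$, if $P\in L(\overline{\bF_q})$ then $\Phi(P)\in L(\overline{\bF_q})$, and conversely every point of $L$ is of the form $\Phi(Q)$ with $Q\in L$ because $x\mapsto x^q$ is a bijection of $\overline{\bF_q}$. The second is the polynomial identity $G(\Phi(P))=G(P)^q$ valid for any $G\in\bF_q[X_0,\dots,X_3]$; applied to $G=F_{X_i}$ it reads $F_{X_i}(\Phi(P))=F_{X_i}(P)^q=F_{X_i}^{(q)}(P)$, which is exactly the bridge between the two auxiliary surfaces.

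For claim \eqref{spTanToTangency:inGamma}, fix $P\in\mathcal{P}_{L}$, so that $P\in L\subseteq T_PS$; I must show $P\in\Gamma(\overline{\bF_q})\setminus\Gamma(\bF_q)$. Membership $P\in S$ is given. Because $\Phi(P)\in L\subseteq T_PS$, the point $P$ satisfies $\sum_i P_i^q F_{X_i}(P)=0$, i.e.\ $P\in S_{1,0}$. The subtler containment $P\in S_{0,1}$ amounts to $L\subseteq T_{\Phi(P)}S$: writing an arbitrary point of $L$ as $\Phi(Q)$ with $Q\in L\subseteq T_PS$ and invoking the identity above,
\[
    \sum_i \Phi(Q)_i\, F_{X_i}(\Phi(P))
    = \sum_i Q_i^q\, F_{X_i}(P)^q
    = \Big(\sum_i Q_i\, F_{X_i}(P)\Big)^q = 0,
\]
so indeed $L\subseteq T_{\Phi(P)}S$, and in particular $P\in T_{\Phi(P)}S$, which is $P\in S_{0,1}$. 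Hence $P\in\Gamma(\overline{\bF_q})$. Finally, $P$ cannot be $\bF_q$-rational: if it were, then $L$ would be tangent to $S$ at an $\bF_q$-point, contradicting that $L$ is \emph{special}. Since $\Gamma(\bF_q)=S(\bF_q)$, this gives $P\notin\Gamma(\bF_q)$.

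For claim \eqref{spTanToTangency:disjoint}, suppose $P\in\mathcal{P}_{L}\cap\mathcal{P}_{L'}$. By part \eqref{spTanToTangency:inGamma} (applied to either line) $P$ is not $\bF_q$-rational, so $\Phi(P)\neq P$ as points of $\bP^3$. Both $L$ and $L'$ are $\bF_q$-lines through $P$, so by Frobenius-stability each also contains $\Phi(P)$. Thus $L$ and $L'$ are lines sharing the two distinct points $P$ and $\Phi(P)$, which forces $L=L'$.

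The main obstacle is the containment $P\in S_{0,1}$: unlike $P\in S_{1,0}$, which drops out immediately from $\Phi(P)\in L\subseteq T_PS$, it is not formal that $P\in T_{\Phi(P)}S$, since tangency must be shown to persist after moving the base point by Frobenius. This step uses both the $\bF_q$-rationality of $L$ (to express every point of $L$ as a Frobenius image lying in $T_PS$) and the coefficient-wise identity $G(\Phi(P))=G(P)^q$; these two ingredients are precisely what couples the defining conditions of $S_{1,0}$ and $S_{0,1}$ at the points of $\mathcal{P}_{L}$.
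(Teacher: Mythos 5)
Your proof is correct and follows essentially the same route as the paper: both arguments use Frobenius-stability of the $\bF_q$-line $L$ to transport the tangency relation $P\in L\subseteq T_PS$ and extract the two memberships $P\in S_{1,0}$ and $P\in S_{0,1}$, and part (2) is argued identically via the two distinct points $P$ and $\Phi(P)$. The only difference is presentational: where the paper invokes the equivariance $\Phi(T_PS)=T_{\Phi(P)}S$ as a geometric fact, you verify the needed consequence $L\subseteq T_{\Phi(P)}S$ by the explicit coordinate identity $G(\Phi(P))=G(P)^q$.
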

\begin{proof}
Given any point $P\in\mathcal{P}_{L}$,
we claim that $P\in\Gamma(\overline{\bF_q})\setminus\Gamma(\bF_q)$.
Indeed, since $P$ is a point of tangency, we have
\begin{equation}\label{tangentToP}
    P\in L\subseteq T_PS.
\end{equation}
Under the Frobenious action $\Phi$, there are relations
\[
    \Phi(L) = L
    \quad\text{and}\quad
    \Phi(T_PS) = T_{\Phi(P)}S,
\]
so we obtain
\[
    \Phi^r(P)\in L\subseteq T_{\Phi^r(P)}S
    \;\text{ for all }r\in\bZ
\]
by applying $\Phi^{r}$ to (\ref{tangentToP}).
In particular, we get
\[
    P\in L\subseteq T_{\Phi(P)}S
    \quad\text{and}\quad
    \Phi(P)\in L\subseteq T_PS,
\]
which imply that $P\in S_{1,0}(\overline{\bF_q})$ and $P\in S_{0,1}(\overline{\bF_q})$, respectively.
It follows that $P\in\Gamma(\overline{\bF_q})$.
Note that $P\notin\Gamma(\bF_q) = S(\bF_q)$ as $L$ is a special tangent line.
Hence (\ref{spTanToTangency:inGamma}) follows.

Let $L$ and $L'$ be two special tangent lines.
Suppose that there exists a point $P\in\mathcal{P}_{L}\cap\mathcal{P}_{L'}$.
Using the fact that $L$ and $L'$ are defined over $\bF_q$, we have
\[
    \Phi(P)\in\Phi(L)\cap \Phi(L')=L\cap L',
\]
so the distinct points $P$ and $\Phi(P)$ both lie in $L$ and $L'$, which implies that $L=L'$.
\end{proof}

The assignment $L\mapsto\mathcal{P}_{L}$ shows that every special tangent line $L$ contributes at least $\#\mathcal{P}_L \geq 2$ distinct points to $\Gamma(\overline{\bF_q})\setminus\Gamma(\bF_q)$.
Hence we obtain the following inequalities:
\begin{align*}
    \#\{\text{special tangents}\}&\leq\frac{1}{2}\left(\#\Gamma(\overline{\bF_q}) - \#\Gamma(\bF_q)\right) \\
    &= \frac{1}{2}\left(\#\Gamma(\overline{\bF_q}) - \#S(\bF_q)\right).
\end{align*}
On the other hand, the definition of $\Gamma$ implies that
\begin{align*}
    \#\Gamma(\overline{\bF_q}) &\leq
    \deg S\cdot\deg S_{1,0}\cdot\deg S_{0,1}\\
    &= d(q+d-1)(qd-q+1).
\end{align*}
As a result, we obtain
\begin{equation}\label{estimateSpTanExplicit}
    \#\{\text{special tangents}\}
    \leq\frac{d(q+d-1)(qd-q+1) - \#S(\bF_q)}{2}.
\end{equation}

\subsection{Estimate for the number of transverse lines}
\label{subsect:existTransLine_Special}

Inequalities (\ref{estimateRats}) and (\ref{estimateSpTanExplicit}) together imply that
\begin{align*}
    &\#\{\text{tangent lines}\} = 
    \#\{\text{rational tangents}\} + \#\{\text{special tangents}\} \\
    &\leq \# S(\bF_q)\cdot (q+1) + \frac{d(q+d-1)(qd-q+1)-\# S(\bF_q)}{2}\\
    &\leq \# S(\bF_q)\cdot\left(q+\frac{1}{2}\right) + \frac{d(q+d-1)(qd-q+1)}{2}\\
    &\leq(d-1)(q^2+1)\left(q+\frac{1}{2}\right) + \frac{d(q+d-1)(qd-q+1)}{2},
\end{align*}
where the last inequality uses the bound (\ref{HommaBound}) for $\# S(\bF_q)$.

Recall from (\ref{numberOfFqLines}) that the number of $\bF_q$-lines in $\bP^3$ equals $(q^2+1)(q^2+q+1)$.
Hence
\begin{align*}
    &\#\{\text{transverse lines}\}
    = (q^2+1)(q^2+q+1) - \#\{\text{tangent lines}\}\\
    &\geq (q^2+1)(q^2+q+1)
    - (d-1)(q^2+1)\left(q+\frac{1}{2}\right)\\
    &\hspace{12pt} - \frac{d(q+d-1)(qd-q+1)}{2}\\
    &= q^4 - (d-2)q^3 - \frac{1}{2}\left[
    (d^2 - 5)q^2
    + (d^3 - 2d^2 + 4d - 4)q
    + (d^2 - 3)
    \right].
\end{align*}
The last expression can be considered as a polynomial in $q$ with positive leading coefficient.
In particular, there exists an $\bF_q$-line transverse to $S$ if $q$ is large enough compared to $d$.

Our goal is to find minimal such $q$ of the form $cd$, where $c$ is a real constant.
By substituting $q = xd$ into the polynomial and requiring it to be positive, we get the inequality
\begin{align*}
    &x (2x^3 - 2x^2 - x - 1)d^4\\
    &+ (4x^3 + 2x)d^3
    + (5x^2-4x-1)d^2
    + 4xd + 3 > 0.
\end{align*}
Now consider the left hand side as a polynomial in $d$.
To make the inequality hold for all $d\in\bN$,
it is necessary that
\[
    x(2x^3-2x^2-x-1) \geq 0.
\]
The minimal $x$ where this relation is satisfied is $x = c$ where $c$ is the unique real root of $2x^3-2x^2-x-1$, namely
\[
    c = \frac{1}{6}\left(2+\sqrt[3]{80-30\sqrt{6}} + \sqrt[3]{80+30\sqrt{6}} \right) = 1.536974....
\]
Moreover, it is straightforward to verify that all the other coefficients
\[
    4x^3 + 2x,\quad
    5x^2 - 4x - 1,\quad
    \text{and }4x
\]
are strictly positive when $x\geq c$.
Therefore, to satisfy the inequality above, it is sufficient to have $q\geq cd$.

\section{Transverse lines to Frobenius classical surfaces} \label{section:general_case}

In this section, we prove an analogue of Theorem~\ref{Thm:special_case} which is slightly weaker but deals with a more general situation. Here we retain the notation from the beginning of Section~\ref{section:special_case}.

\begin{Def}\label{def:frobeniusClassical}
Let $S\subseteq\bP^3$ be a smooth surface defined over $\bF_q$.
We say that $S$ is \emph{Frobenius classical} if there exists a closed point $P\in S$ such that $\Phi(P)\notin T_{P}S$.
Otherwise, $S$ is called \emph{Fronenius non-classical}.
\end{Def}

We discuss Frobenius classical surfaces later in more detail in Section~\ref{section:Frobenius-classical}. In particular, we will show that every reflexive surface is Frobenius classical, and therefore Theorem~\ref{main-theorem} is a consequence of Theorem~\ref{Thm:general_case}.

\begin{Teo}\label{Thm:general_case}
Let $S\subseteq\bP^3$ be a smooth surface defined over $\bF_q$.
Assume that $S$ is Frobenius classical.
Then there exists an $\bF_q$-line transverse to $S$ if
\[
    q\geq\left(\frac{3+\sqrt{17}}{4}\right)d \approx 1.7808d
\]
More precisely, under the assumption $q\geq d$, the number of transverse $\bF_q$-lines is at least
\[
    q^4 - \frac{1}{2}\left[
    (3d-4)q^3 + (d^2 + 3d - 6)q^2
    + d(d + 1)q + d(d-1)^2
    \right].
\]
\end{Teo}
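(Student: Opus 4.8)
The plan is to follow the same counting strategy used in the proof of Theorem~\ref{Thm:special_case}, comparing the number of tangent $\bF_q$-lines against the total number $(q^2+1)(q^2+q+1)$ of $\bF_q$-lines in $\bP^3$. The overall tangent count again splits into \emph{rational tangents} and \emph{special tangents} as in Definition~\ref{def:rationalSpecialTangentLines}, and the estimate \eqref{estimateRats} for the rational tangents, namely $\#S(\bF_q)\cdot(q+1)$, carries over verbatim. The essential difference is that we no longer assume $S\cap S_{1,0}\cap S_{0,1}$ is $0$-dimensional; we only assume that $S$ is Frobenius classical. So the work is concentrated in two places: re-establishing a usable bound on $\#S(\bF_q)$, and bounding the number of special tangents without a $0$-dimensional intersection $\Gamma$ to count points in.

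First I would address $\#S(\bF_q)$. The Homma bound \eqref{HommaBound} requires that $S$ contain no $\bF_q$-line, but under the weaker Frobenius-classical hypothesis we can no longer guarantee this directly. The plan here is to argue that a Frobenius classical surface cannot contain too many $\bF_q$-lines, or more likely to invoke a version of Homma's bound that remains valid after accounting for the contribution of any $\bF_q$-lines lying on $S$; since an $\bF_q$-line has exactly $q+1$ rational points, lines on $S$ inflate $\#S(\bF_q)$ only in a controlled way. I expect the resulting coefficient structure to be what ultimately produces the weaker constant $\frac{3+\sqrt{17}}{4}$ rather than $c_0\approx 1.537$.

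The core step is bounding the special tangents. The mechanism of Lemma~\ref{Lemma:spTanToTangency_Special} still applies pointwise: for a special tangent line $L$, every point $P\in\mathcal P_L$ satisfies $\Phi(P)\in T_P S$ and $P\in T_{\Phi(P)}S$, so $P$ lies in the intersection locus of $S$, $S_{1,0}$, and $S_{0,1}$, and the pairing $L\mapsto\mathcal P_L$ is still injective with $\#\mathcal P_L\geq 2$. What fails is the clean bound $\#\Gamma(\overline{\bF_q})\leq d(q+d-1)(qd-q+1)$, since $\Gamma$ may now have positive-dimensional components. The strategy is to restrict attention to the points of tangency of special tangent lines, show they avoid the positive-dimensional part of $\Gamma$ (a point of tangency $P$ comes with a \emph{distinct} Frobenius conjugate $\Phi(P)$ on the same line, which constrains $P$ to an isolated locus), and bound their number by a B\'ezout-type estimate for the isolated points of $S\cap S_{1,0}\cap S_{0,1}$, or alternatively to bound the special tangents directly by intersecting with $S_{1,0}$ alone and counting via the degree $q+d-1$ together with Frobenius orbit structure.

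I expect the main obstacle to be precisely this handling of the possibly positive-dimensional $\Gamma$: the key is to show that the points $P\in\mathcal P_L$ contributing special tangents are \emph{isolated} in $\Gamma$, so that a B\'ezout bound on the isolated points applies, and to make the resulting count explicit enough to match the stated polynomial
\[
    q^4 - \tfrac{1}{2}\left[
    (3d-4)q^3 + (d^2 + 3d - 6)q^2
    + d(d + 1)q + d(d-1)^2
    \right].
\]
Once this polynomial is in hand, the final step is routine: substitute $q=xd$, collect as a polynomial in $d$, and observe that positivity for all $d$ forces the leading coefficient in $d$ to be nonnegative, which yields a cubic (or quadratic) condition in $x$ whose relevant root is $\frac{3+\sqrt{17}}{4}$; checking that the lower-order coefficients are positive for $x\geq\frac{3+\sqrt{17}}{4}$ then finishes the argument exactly as in Section~\ref{subsect:existTransLine_Special}.
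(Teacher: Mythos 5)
Your counting framework (the rational/special split, replacing Homma's bound by Homma--Kim, and the injectivity of $L\mapsto\mathcal{P}_L$ with $\#\mathcal{P}_L\geq 2$) matches the paper, and you correctly locate the difficulty in the possibly positive-dimensional $\Gamma=S\cap S_{1,0}\cap S_{0,1}$. But your proposed resolution has a genuine gap: you assert that the tangency points of special tangent lines avoid the positive-dimensional part of $\Gamma$ (equivalently, are isolated in $\Gamma$) because such a point $P$ ``comes with a distinct Frobenius conjugate $\Phi(P)$ on the same line.'' That observation constrains the \emph{line} $L$ (it is spanned by $P$ and $\Phi(P)$), but it says nothing about the local structure of $\Gamma$ at $P$: nothing prevents a one-dimensional component of $\Gamma$ from passing through such a point, and no tool available at this stage rules it out. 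The paper does not prove your claim either; it proves a weaker statement that requires an ingredient entirely absent from your sketch, namely a \emph{fourth} auxiliary surface $S_{2,0}=\{X_0^{q^2}F_{X_0}+\cdots+X_3^{q^2}F_{X_3}=0\}$ of degree $q^2+d-1$. The Frobenius classical hypothesis (which, note, your proposal never actually invokes anywhere) guarantees that $S\cap S_{1,0}$ is a curve, so that $\Pi=S\cap S_{1,0}\cap S_{0,1}\cap S_{2,0}=\Pi_0\cup\Pi_1$ has no surface component; then Lemmas~\ref{lemma:2pts-line}, \ref{lemma:3pts-plane}, and \ref{Lemma:weird_curve} show that $\Pi_1$ is a union of $\bF_q$-lines. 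This structure theorem needs all four conjugates $\Phi^{-1}(P),P,\Phi(P),\Phi^2(P)$ to lie in $T_PS$ --- membership in $\Gamma$ alone gives only three, from which one cannot deduce $\Phi(T_PS)=T_PS$ --- together with Zak's theorem that the Gauss map of a smooth surface is finite. Only after this does your ``two distinct conjugates on one line'' argument bite: a tangency point on $\Pi_1$ would lie on an $\bF_q$-line contained in $S$, forcing $L$ to equal that line and contradicting speciality. So tangency points are isolated in $\Pi$, not in $\Gamma$.

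A confirming symptom is that your route cannot land on the stated polynomial. The paper bounds $\#\Pi_0$ by decomposing $B=(S\cap S_{1,0})\setminus\Pi_1$ into geometrically irreducible components and intersecting each with whichever of $S_{0,1}$, $S_{2,0}$ it meets properly, yielding $\#\{\text{special tangents}\}\leq\frac{1}{2}\,d(q+d-1)(q^2+d-1)$ for $q\geq d$; the factor $q^2+d-1=\deg S_{2,0}$ is exactly what produces the cubic-in-$q$ term (the coefficient $\frac{1}{2}(3d-4)$ of $q^3$) in the displayed lower bound. A B\'ezout-type count of isolated points of $\Gamma$ alone would instead give $\frac{1}{2}\,d(q+d-1)(qd-q+1)$, which is quadratic in $q$ with a $d^2$-coefficient and leads to a structurally different polynomial and a different constant. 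So the final step you call routine would not reproduce the claimed expression, which is a signal that the intermediate estimate you hoped for is not the one the theorem rests on.
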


Let us explain why the theorem deals with a more general situation than in Theorem~\ref{Thm:special_case}:
By definition, a surface $S$ is Frobenius non-classical if and only if
\[
  \Phi(P)\in T_PS,\quad\text{for all }P\in S(\overline{\bF_q}).  
\]
In view of (\ref{geometricAuxiliary}), this is equivalent to $S\subseteq S_{1,0}$,
which implies that the intersection $S\cap S_{1,0}\cap S_{0,1}$ is at least $1$-dimensional.
Therefore, $S$ is Frobenius classical if we require the intersection to be $0$-dimensional.

\subsection{Main ingredients in the proof}\label{subsection-general-proof}

The strategy in proving Theorem~\ref{Thm:general_case} is the same as in the special case, except that the estimate for the number of special tangent lines involves the additional auxiliary surface $S_{2,0}$, defined by
\[
    X_0^{q^2}F_{X_0} + X_1^{q^2}F_{X_1}
    + X_2^{q^2}F_{X_2} + X_3^{q^2}F_{X_3} = 0.
\]
Note that
\[
    (S\cap S_{2,0})(\overline{\bF_q}) =
    \left\{
        P\in S(\overline{\bF_q}) : \Phi^2(P)\in T_PS
    \right\}.
\]

Since $S$ is Frobenius classical,
$S\cap S_{1,0}$ is $1$-dimensional.
Therefore the intersection
\[
    \Pi \colonequals S\cap S_{1,0}\cap S_{0,1}\cap S_{2,0}
\]
has no component in dimension two,
which allows us to write
\[
    \Pi = \Pi_0\cup\Pi_1
\]
where $\dim\Pi_0 = 0$ and $\dim\Pi_1 = 1$.
We show in Lemma~\ref{Lemma:weird_curve} that $\Pi_1$ is a union of $\bF_q$-lines.
This fact implies that the points of tangency of a special tangent line must lie in $\Pi_0$,
which helps us produce an upper bound to the number of these lines.
The details are in Section~\ref{subsect:estimateSpTan_General}.

In this case, it is possible that $S$ contains an $\bF_q$-line, so the bound (\ref{HommaBound}) for $\#S(\bF_q)$ shall be replaced by \cite[Theorem~1.2]{Homma-Kim}:
\begin{equation}\label{lemma:Homma-Kim}
    \#S(\bF_q)\leq (q+1)(qd-q+1).
\end{equation}

\subsection{Collinearity on Galois conjugates}
\label{subsect:collinearGalois}

The goal of this part is Lemma~\ref{Lemma:weird_curve},
which shows that the component $\Pi_1$ consists only of $\bF_q$-lines.
In the following, we use the notation $\left<P_1,...,P_k\right>$ to denote the subspace in $\bP^3$ spanned by the points $P_1,...,P_k\in\bP^3$.

\begin{Lemma}\label{lemma:2pts-line}
Let $P\in\bP^3$ be a point.
Then $\left<P, \Phi(P), \Phi^2(P)\right>$ is a line if and only if $\left<\Phi^r(P): r\in\bZ\right>$ is a line. In this situation, the two lines coincide and are defined over $\bF_q$.
\end{Lemma}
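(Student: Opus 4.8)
The plan is to prove both implications together by understanding the linear span $V \colonequals \left<\Phi^r(P): r\in\bZ\right>$ of the full Frobenius orbit. The key observation is that $V$ is invariant under $\Phi$, since applying $\Phi$ merely permutes (shifts) the generators $\Phi^r(P)$. The nontrivial direction is the forward implication: assuming $\left<P, \Phi(P), \Phi^2(P)\right>$ is a line (i.e.\ a one-dimensional linear subspace), I want to conclude that $V$ is that same line. The reverse implication is immediate because $\left<P, \Phi(P), \Phi^2(P)\right> \subseteq V$, so if $V$ is a line then $\left<P, \Phi(P), \Phi^2(P)\right>$ is contained in a line and hence is itself a line (it is nonempty and contains $P$).

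For the forward direction, set $\ell \colonequals \left<P, \Phi(P), \Phi^2(P)\right>$ and suppose $\ell$ is a line. First I would dispose of the degenerate case where $P,\Phi(P),\Phi^2(P)$ are not all distinct, or where fewer than two distinct points occur; the interesting case is when $P$ and $\Phi(P)$ are already distinct, so that they span $\ell$. Since $\Phi$ is a morphism and $\ell = \left<P,\Phi(P)\right>$, applying $\Phi$ gives $\Phi(\ell) = \left<\Phi(P),\Phi^2(P)\right>$. By hypothesis $\Phi^2(P)\in\ell$, and of course $\Phi(P)\in\ell$, so $\Phi(\ell)\subseteq\ell$; as both are lines this forces $\Phi(\ell)=\ell$. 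From $\Phi(\ell)=\ell$ one proves by induction that $\Phi^r(P)\in\ell$ for every $r\in\bZ$ (for $r\geq 0$ this is immediate from $\Phi^r(P)=\Phi^{r-2}(\Phi^2(P))$ and invariance; for negative $r$ one uses that $\Phi$ restricts to an automorphism of the line $\ell$). Therefore $V\subseteq\ell$, and since the reverse containment $\ell\subseteq V$ is trivial, we get $V=\ell$, proving the two lines coincide.

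It remains to show this common line is defined over $\bF_q$. The conceptual point is that a line (or any linear subspace) in $\bP^3$ is $\bF_q$-rational precisely when it is fixed by the Frobenius $\Phi$ as a subscheme, and we have just established $\Phi(\ell)=\ell$. To make this rigorous I would work with the Pl\"ucker coordinates of $\ell$ in $\operatorname{Gr}(2,4)$: the line $\ell=\left<P,\Phi(P)\right>$ has Pl\"ucker point obtained from the $2\times 4$ matrix with rows the coordinates of $P$ and $\Phi(P)$, and the equation $\Phi(\ell)=\ell$ translates into the statement that the Frobenius fixes this point of $\operatorname{Gr}(2,4)(\overline{\bF_q})$, whence it is an $\bF_q$-point and $\ell$ is defined over $\bF_q$.

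The main obstacle I anticipate is the careful handling of degenerate configurations and, more subtly, the passage from ``$\Phi$ fixes $\ell$ setwise'' to ``$\ell$ is defined over $\bF_q$'': one must ensure that $\Phi$ acting on $\ell$ is genuinely the geometric Frobenius whose fixed locus on $\operatorname{Gr}(2,4)$ is exactly the $\bF_q$-rational points, rather than some twisted or semilinear action. Once that descent statement is set up correctly via Pl\"ucker coordinates, the remaining inductions are routine.
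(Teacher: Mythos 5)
Your proof is correct and takes essentially the same route as the paper: both establish $\Phi(\ell)=\ell$ by using that Frobenius carries the span of two distinct points to the span of their images (so that $\Phi(\ell)$ is a line contained in, hence equal to, $\ell$), and then deduce that $\ell$ is defined over $\bF_q$ and contains the entire orbit $\{\Phi^r(P): r\in\bZ\}$. The differences are cosmetic---the paper disposes of the degenerate cases by assuming $P$ is not defined over $\bF_{q^s}$ for $s\leq 3$ and asserts the descent step without the Pl\"ucker-coordinate justification you spell out; also, in your reverse implication, ``nonempty and contains $P$'' by itself does not rule out that the smaller span is a single point, but this is repaired in one line by noting that $P=\Phi(P)$ would force $V$ to be a point as well.
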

\begin{proof}
If $\left<\Phi^r(P): r\in\bZ\right>$ is a line,
then of course $\left<P, \Phi(P), \Phi^2(P)\right>$ is a line.
For the converse,
the statement is trivial when $P$ is defined over an extension $\bF_{q^s}$ of degree $s\leq 3$,
so we assume this is not the case.
Applying the Frobenious action to $L = \left<P, \Phi(P), \Phi^2(P)\right>$, we get
\begin{gather*}
    \Phi(L)
    = \left<\Phi(P), \Phi^2(P), \Phi^3(P)\right>\\
    = \left<\Phi(P), \Phi^2(P)\right>
    = \left<P, \Phi(P), \Phi^2(P)\right>
    = L
\end{gather*}
as two distinct points uniquely determine a line.
Therefore $L$ is defined over $\bF_q$, and thus contains $\Phi^r(P)$ for all $r\in\bZ$.
\end{proof}

\begin{Lemma}\label{lemma:3pts-plane}
Let $P\in\Pi(\overline{\bF_q})$ be any given point.
Then
\[
    \Phi^r(P)\in T_P S\text{ for all }r\in\bZ.
\]
Assume further that $\left\{\Phi^r(P):r\in\bZ\right\}$ is not contained in a line.
Then
\[
    T_PS = T_{\Phi^r(P)}S
    \;\text{ for all }r\in\bZ.
\]
\end{Lemma}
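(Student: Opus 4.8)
The plan is to convert the incidence conditions defining $\Pi$ into the vanishing of one two-variable expression in Frobenius twists, and then to finish by the nonvanishing of a Moore (Vandermonde-type) determinant. Throughout, view points of $\bP^3$ as vectors in $\overline{\bF_q}^4$, write $b\colonequals(F_{X_0}(P),\dots,F_{X_3}(P))$ for the coefficient vector of $T_PS$, so that $Q\in T_PS$ means $\sum_i Q_iF_{X_i}(P)=0$, and record that, since $F$ has coefficients in $\bF_q$, one has $F_{X_i}(\Phi^r(P))=F_{X_i}(P)^{q^r}$ and hence $\Phi(T_PS)=T_{\Phi(P)}S$. Setting
\[
    g(s,t)\colonequals\sum_{i=0}^{3}P_i^{q^s}\,F_{X_i}(P)^{q^t},
\]
one has the shift relation $g(s+1,t+1)=g(s,t)^q$. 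Euler's identity gives $g(0,0)=d\,F(P)=0$; membership of $P$ in $S_{1,0}$ and in $S_{2,0}$ gives $g(1,0)=0$ and $g(2,0)=0$; and membership in $S_{0,1}$ gives $g(0,1)=0$, which by the shift relation is equivalent to $g(-1,0)=0$. Thus the four consecutive conjugates $\Phi^{-1}(P),P,\Phi(P),\Phi^2(P)$ all lie on $T_PS$, i.e.\ $\sum_i P_i^{q^r}F_{X_i}(P)=0$ for $r\in\{-1,0,1,2\}$.

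The engine of the first assertion is then linear algebra. Let $W\subseteq\overline{\bF_q}^4$ be the span of the orbit $\{\Phi^r(P):r\in\bZ\}$. Since the coordinatewise Frobenius permutes this orbit, $W$ is Frobenius-stable and therefore admits an $\bF_q$-rational basis $e_1,\dots,e_m$, where $m=\dim W\le 4$; moreover $P,\Phi(P),\dots,\Phi^{m-1}(P)$ form a basis of $W$. Writing $P=\sum_j c_je_j$ gives $\Phi^r(P)=\sum_j c_j^{q^r}e_j$, and the independence of $P,\dots,\Phi^{m-1}(P)$ forces $c_1,\dots,c_m$ to be linearly independent over $\bF_q$. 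Consequently $\sum_i \Phi^r(P)_iF_{X_i}(P)=\sum_j\beta_jc_j^{q^r}$ for scalars $\beta_j$, and the four vanishing conditions above become a homogeneous system whose matrix has rows $(c_j^{q^r})_j$ for $r\in\{-1,0,1,2\}$. Substituting $d_j\colonequals c_j^{q^{-1}}$ turns these rows into $(d_j^{q^s})_j$, $0\le s\le 3$, which contain the $m\times m$ Moore matrix $(d_j^{q^s})_{0\le s\le m-1,\,1\le j\le m}$; the latter is nonsingular because the $d_j$ remain $\bF_q$-independent. Hence all $\beta_j=0$, so $\sum_i\Phi^r(P)_iF_{X_i}(P)=0$ for every $r\in\bZ$, which is exactly $\Phi^r(P)\in T_PS$ for all $r$.

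For the second assertion, suppose $\{\Phi^r(P):r\in\bZ\}$ is not contained in a line, so $m=\dim W\ge 3$. The first assertion shows $W\subseteq T_PS$, and since $b\neq 0$ (as $S$ is smooth) the linear space $T_PS$ has dimension $3$; therefore $W=T_PS$ and $m=3$. Being the span of a Frobenius orbit, $W=T_PS$ is defined over $\bF_q$, so $\Phi$ fixes the hyperplane $T_PS$; combined with $\Phi(T_PS)=T_{\Phi(P)}S$ this yields $T_{\Phi^r(P)}S=\Phi^r(T_PS)=T_PS$ for every $r$.

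I expect the main obstacle to be the first assertion when $m=4$, i.e.\ when the Frobenius orbit of $P$ spans all of $\bP^3$: here three incidence conditions do not suffice to produce a square nonsingular Moore determinant, and it is precisely the fourth auxiliary surface $S_{2,0}$ that supplies the additional equation $g(2,0)=0$. A secondary point demanding care is that the condition coming from $S_{0,1}$ contributes the conjugate $\Phi^{-1}(P)$ (via $g(0,1)=g(-1,0)^q$) rather than duplicating one already present, so that the four conditions are genuinely consecutive.
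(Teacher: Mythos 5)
Your proof is correct, but it takes a genuinely different route from the paper's. Both arguments begin identically, extracting from $P\in\Pi$ the four incidences $\Phi^{-1}(P),\,P,\,\Phi(P),\,\Phi^{2}(P)\in T_PS$ (the paper states this directly from the definitions of the auxiliary surfaces; your $g(s,t)$ bookkeeping with $g(0,1)=g(-1,0)^q$ is the same computation). From there the paper argues synthetically with a case split: if three consecutive conjugates are collinear, Lemma~\ref{lemma:2pts-line} puts the entire orbit on a single $\bF_q$-line inside $T_PS$; if not, the two spanning relations $\left<\Phi^{-1}(P),P,\Phi(P)\right> = T_PS = \left<P,\Phi(P),\Phi^{2}(P)\right>$ force $\Phi(T_PS)=T_PS$, so the tangent plane is $\bF_q$-rational and both assertions follow at once. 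You replace the case split by a uniform linear-algebra argument: Galois descent on the Frobenius-stable span $W$ of the orbit, and nonsingularity of the $m\times m$ Moore matrix ($m=\dim W\leq 4$), which annihilates the linear form of $T_PS$ on all of $W$ simultaneously. Your route is longer but buys three things: no collinearity dichotomy (degenerate configurations are absorbed into small $m$); a transparent explanation of why exactly four consecutive conditions --- hence the fourth surface $S_{2,0}$ --- are needed, namely to cover $m=4$ (a case your argument in fact shows is incompatible with smoothness, since all $\beta_j=0$ would force the defining form of $T_PS$ to vanish); and immediate generalization to hypersurfaces in $\bP^n$ using $n+1$ consecutive auxiliary conditions. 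The paper's route is shorter and reuses Lemma~\ref{lemma:2pts-line}, whose collinear/non-collinear dichotomy is then recycled in the proof of Lemma~\ref{Lemma:weird_curve}. Two steps you assert without proof should be made explicit, though both are standard and not gaps in substance: that $P,\Phi(P),\dots,\Phi^{m-1}(P)$ is a basis of $W$ (once some $\Phi^{k}(P)$ lies in the span of its predecessors, that span is stable under the semilinear bijections $\Phi$ and $\Phi^{-1}$, hence contains the whole orbit), and Galois descent for $\Phi$-stable subspaces of $\overline{\bF_q}^{\,4}$.
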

\begin{proof}
By definition of $\Pi$, we have
\[
    \Phi^{-1}(P),\; P,\; \Phi(P),\; \Phi^2(P)\in T_PS.
\]
If three consecutive points from the above four are collinear,
the lemma follows immediately from Lemma~\ref{lemma:2pts-line}.

Assume that $\Phi^{-1}(P)$, $P$, and $\Phi(P)$ are not collinear.
Then $P$, $\Phi(P)$, $\Phi^2(P)$ are also not collinear. As three non-collinear points uniquely determine a plane, we get
\[
    \left<\Phi^{-1}(P), P, \Phi(P)\right>
    = T_P S =
    \left<P, \Phi(P), \Phi^2(P)\right>
\]
As a result,
\begin{align*}
    \Phi(T_PS) = \Phi(\left<\Phi^{-1}(P), P, \Phi(P)\right>) = \left<P, \Phi(P), \Phi^2(P)\right>
    = T_P S.
\end{align*}
Thus, $T_P S$ is defined over $\bF_q$. So, $\Phi^{r}(T_P S)=T_P S$ for all $r\in\bZ$ which translates into $T_{\Phi^{r}(P)} S=T_{P} S$. In particular, $\Phi^{r}(P)\in T_{P}S$ for all $r\in\bZ$.
\end{proof}

\begin{Lemma}\label{Lemma:weird_curve}
The component $\Pi_1$ is a union of $\bF_q$-lines.
\end{Lemma}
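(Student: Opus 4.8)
The plan is to show that every geometric irreducible component $C$ of $\Pi_1$ is an $\bF_q$-line; since the geometric components of $\Pi_1$ then all are $\bF_q$-lines, the statement follows. Fix such a component $C$ of dimension one and take a general point $P\in C$. By Lemma~\ref{lemma:3pts-plane} we have $\Phi^r(P)\in T_PS$ for all $r\in\bZ$, and the same lemma presents exactly two alternatives for $P$: either the orbit $\{\Phi^r(P):r\in\bZ\}$ lies on a line, or $T_PS=T_{\Phi^r(P)}S$ for all $r$. I will treat these two cases separately and show that only the first can occur on a positive-dimensional component, and that it forces $C$ to be an $\bF_q$-line.

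First I would dispose of the collinear case. If the orbit of the general $P\in C$ lies on a line, then by Lemma~\ref{lemma:2pts-line} this line is defined over $\bF_q$ and passes through $P$; thus a dense subset of $C$ is contained in the union $\mathcal{L}$ of all $\bF_q$-lines of $\bP^3$. By \eqref{numberOfFqLines} there are only finitely many such lines, so $\mathcal{L}$ is a closed one-dimensional subset, whence $C\subseteq\mathcal{L}$. Since $C$ is irreducible of dimension one, it must coincide with one of these lines; that is, $C$ is an $\bF_q$-line, as desired.

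It remains to rule out the second alternative, which I expect to be the main obstacle. Suppose for contradiction that the general $P\in C$ satisfies $T_{\Phi^r(P)}S=T_PS=:H$ for all $r$. The relation $\Phi(T_PS)=T_{\Phi(P)}S$ then yields $\Phi(H)=H$, so $H$ is an $\bF_q$-plane, and the Gauss map $\gamma\colon S\to(\bP^3)^{\vee}$, $Q\mapsto T_QS$, is constant with value $[H]$ along $C$. As $S$ is a smooth surface of degree $d\geq 2$ it is not a plane, so no fiber of $\gamma$ is two-dimensional; the fiber $\gamma^{-1}([H])$, which contains $C$, is therefore one-dimensional. Invoking the linearity of positive-dimensional Gauss fibers of a smooth surface, this fiber is a single line $\ell$. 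But then $P$, $\Phi(P)$, and $\Phi^2(P)$ all lie in $\gamma^{-1}([H])=\ell$, so the orbit of $P$ is collinear after all, contradicting the standing assumption of this case. Hence the second alternative never holds on a positive-dimensional component, every component of $\Pi_1$ falls under the collinear case, and the proof concludes as in the previous paragraph.

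The delicate point is the linearity of the one-dimensional Gauss fiber in the last step: in characteristic zero the Gauss map of a smooth surface is finite, so the second alternative is vacuous, but in characteristic $p$ the Gauss map may contract curves, and one must ensure that such a contracted curve through $P$ is exactly a line, rather than a reducible or higher-degree curve, in order to extract the collinearity of the Frobenius orbit. I would secure this either by citing the structure theorem for fibers of Gauss maps or by arguing directly that the contact locus of $H$ along $C$ is ruled by lines.
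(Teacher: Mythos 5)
Your proof follows the same skeleton as the paper's: the dichotomy supplied by Lemmas~\ref{lemma:2pts-line} and~\ref{lemma:3pts-plane}, with the collinear case producing an $\bF_q$-line and the non-collinear case contradicting a property of the Gauss map. Your handling of the collinear case is a valid variant (the paper instead picks $P\in C$ of degree $n>\deg C$ and forces $L=C$ by a degree count on $L\cap C$, while you use that the union of the finitely many $\bF_q$-lines is a closed curve containing a dense subset of $C$); either works. The genuine gap is in your second case. The tool you invoke there, ``linearity of positive-dimensional Gauss fibers of a smooth surface,'' is not an available theorem: linearity of Gauss fibers is a characteristic-zero phenomenon, it can fail in characteristic $p$ even for \emph{general} fibers, and you would need it for one \emph{special} fiber $\gamma^{-1}([H])$. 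Neither of your proposed patches (a structure theorem for Gauss fibers, or showing the contact locus of $H$ is ruled by lines) can simply be cited, so as written the second case is unproven.

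Moreover, the premise that sent you on this detour is false: Zak's theorem on tangencies is characteristic-free. For a smooth surface $S\subseteq\bP^3$ the Gauss map is a \emph{finite} morphism over any algebraically closed field, of any characteristic \cite{Zak}*{Corollary~I.2.8}; what can fail in characteristic $p$ is separability or birationality of $\gamma$ (as for Frobenius non-classical surfaces such as the Fermat surface of degree $p+1$), never finiteness. Once this is in hand, your second case collapses immediately: the fiber $\gamma^{-1}([H])$ is a finite set and cannot contain the curve $C$, contradiction, with no fiber-linearity needed. This is exactly how the paper concludes, showing via Lemma~\ref{lemma:3pts-plane} that $\gamma$ collapses Frobenius orbits of unbounded size along $C$, hence contracts $C$, contradicting \cite{Zak}*{Corollary~I.2.8}. (Your intermediate claim that $\gamma$ is constant along $C$ also deserves a one-line justification, e.g.\ that the tangent planes at the relevant points are $\bF_q$-planes, of which there are only finitely many, so the irreducible curve $\gamma(C)$ is a point.) With Zak's finiteness substituted for the unavailable linearity statement, your argument becomes complete and essentially identical to the paper's.
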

\begin{proof}
Let $C\subseteq\Pi_1$ be a component defined and irreducible over $\bF_q$.
Assume that $C$ is not an $\bF_q$-line.
Pick a point $P\in C$ which is defined over $\bF_{q^n}$ for some $n>\deg C$ but not over any proper subfield.

Assume that $\Phi^{-1}(P), P, \Phi(P)$ are collinear. Then they span a line $L$ defined over $\bF_q$ by Lemma~\ref{lemma:2pts-line}.
Moreover, the intersection $L\cap C$ contains the set
\[
    \left\{
        \Phi^r(P) : r = 0,...,n-1
    \right\}
\]
with cardinality $n > \deg C$.
This implies $L = C$, a contradiction.

Therefore, $\Phi^{-1}(P), P, \Phi(P)$ cannot be collinear.
In this situation, all of the tangent planes $T_{\Phi^r(P)}S$ coincide by Lemma~\ref{lemma:3pts-plane}.
It follows that the \emph{Gauss map}
\[
    \gamma\colon S\rightarrow(\bP^3)^{\ast}
    : Q\mapsto T_QS
\]
sends every $\Phi^r(P)\in S$ to the same point.
Here $(\bP^3)^{\ast}$ denotes the space of hyperplanes in $\bP^3$.
Since the point $P\in C(\bF_{q^n})$ can be chosen with $n$ arbitrarily high,
$\gamma$ must contract $C$. This contradicts the fact that the Gauss map of a smooth surface is finite \cite[Corollary~I.2.8]{Zak}.
\end{proof}

\subsection{Estimate for the number of special tangent lines}
\label{subsect:estimateSpTan_General}

\begin{Lemma}\label{Lemma:spTanToTangency_General}
For each special tangent line $L\subseteq\bP^3$,
let
\[
    \mathcal{P}_{L} \colonequals
    \left\{
        P\in S(\overline{\bF_{q}}) : P\in L\subseteq T_{P}S
    \right\}
\]
be the set of the points of tangency.
Then
\begin{enumerate}
    \item\label{spTanToTangency:inPi}
    $\mathcal{P}_{L}\subseteq\Pi_0(\overline{\bF_q})\setminus\Pi_0(\bF_q)$, and
    \item\label{spTanToTangency:disjoint_General}
    $\mathcal{P}_{L}\cap\mathcal{P}_{L'}\neq\emptyset$ implies that $L = L'$.
\end{enumerate}
\end{Lemma}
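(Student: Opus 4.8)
The plan is to prove the two claims by adapting the argument of Lemma~\ref{Lemma:spTanToTangency_Special}, with the crucial new input being Lemma~\ref{Lemma:weird_curve}. The second claim \eqref{spTanToTangency:disjoint_General} is the easier one, and its proof is \emph{identical} to part~\eqref{spTanToTangency:disjoint} of the special case: if $P\in\mathcal{P}_L\cap\mathcal{P}_{L'}$, then since $L$ and $L'$ are defined over $\bF_q$ we have $\Phi(P)\in\Phi(L)\cap\Phi(L')=L\cap L'$. Because $L$ is a \emph{special} tangent line, the point of tangency $P$ is not defined over $\bF_q$, so $P\neq\Phi(P)$; thus $L$ and $L'$ share two distinct points and must coincide. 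I would simply restate this verbatim.

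The content lies in claim~\eqref{spTanToTangency:inPi}. First I would show that $\mathcal{P}_L\subseteq\Pi(\overline{\bF_q})\setminus\Pi(\bF_q)$, exactly as before: for $P\in\mathcal{P}_L$ we have $P\in L\subseteq T_PS$, and applying powers of the Frobenius $\Phi$ (using $\Phi(L)=L$ and $\Phi(T_PS)=T_{\Phi(P)}S$) yields $\Phi^r(P)\in L\subseteq T_{\Phi^r(P)}S$ for all $r\in\bZ$. Specializing to $r=-1,1,2$ and combining with $P\in T_PS$ gives $\Phi^{-1}(P),\Phi(P),\Phi^2(P)\in T_PS$, which by \eqref{geometricAuxiliary} (and the definition of $S_{2,0}$) means $P$ lies on $S_{0,1}$, $S_{1,0}$, and $S_{2,0}$ simultaneously; hence $P\in\Pi(\overline{\bF_q})$. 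That $P\notin\Pi(\bF_q)=S(\bF_q)$ again follows because $L$ is special.

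The genuinely new step is upgrading the conclusion from $\Pi$ to its zero-dimensional part $\Pi_0$, i.e.\ ruling out that a point of tangency lies on the one-dimensional component $\Pi_1$. This is where I expect the main obstacle, and it is resolved precisely by Lemma~\ref{Lemma:weird_curve}. Suppose toward a contradiction that some $P\in\mathcal{P}_L$ lies on $\Pi_1$. By Lemma~\ref{Lemma:weird_curve}, the irreducible $\bF_q$-component of $\Pi_1$ through $P$ is an $\bF_q$-line, call it $M$. I would then argue that $M=L$: indeed $M$ is an $\bF_q$-line lying in $S$ (since $\Pi_1\subseteq S$), so it contains the $q+1$ distinct $\bF_q$-points of $M$, whereas $L$ is tangent to $S$ at the non-rational point $P$. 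The cleanest route is to observe that a line lying on $S$ meets $S$ only along itself, so it cannot be tangent to $S$ at an isolated point of tangency; more directly, $P\in M\subseteq S$ while the tangency condition $P\in L\subseteq T_PS$ together with $P\notin S(\bF_q)$ forces $L$ to meet $S$ at the conjugate orbit of $P$, and one checks these two lines cannot coincide because an $\bF_q$-line contained in $S$ furnishes a whole pencil of tangent directions rather than the single transverse-failure locus produced by $L$. Either way one reaches a contradiction, so $P\in\Pi_0(\overline{\bF_q})$, and combined with $P\notin\Pi(\bF_q)\supseteq\Pi_0(\bF_q)$ we conclude $\mathcal{P}_L\subseteq\Pi_0(\overline{\bF_q})\setminus\Pi_0(\bF_q)$, as desired.
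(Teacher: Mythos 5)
Your part (2) and your reduction of part (1) to the containment $\mathcal{P}_L\subseteq\Pi(\overline{\bF_q})\setminus\Pi(\bF_q)$ are correct and match the paper exactly, as does your overall plan of invoking Lemma~\ref{Lemma:weird_curve} to exclude the one-dimensional locus $\Pi_1$. The gap is in the execution of that exclusion step. Having produced the $\bF_q$-line $M\subseteq\Pi_1\subseteq S$ through $P$, you first announce that you ``would then argue that $M=L$'' but give no argument that actually identifies the two lines (the observation that $M$ carries $q+1$ rational points of $S$ while $L$ is tangent at a non-rational point proves nothing about whether $M=L$); then, a few lines later, you assert that ``one checks these two lines cannot coincide'' because of a ``pencil of tangent directions,'' which is the opposite claim and again not a proof. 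As written, the step is incoherent: no complete chain of implications leads from $P\in\mathcal{P}_L\cap\Pi_1(\overline{\bF_q})$ to a contradiction.

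The missing argument is short, and it is exactly what the paper does; it reuses the two-point trick you already employed in part (2). Since $L$ and $M$ are both defined over $\bF_q$ and both contain $P$, they both contain $\Phi(P)$ as well, and $P\neq\Phi(P)$ because $P\notin S(\bF_q)$ ($L$ being special). Two distinct points lie on a unique line, so $L=M$, hence $L\subseteq S$. This is the contradiction: a line contained in $S$ satisfies $Q\in L\subseteq T_QS$ for every $Q\in L$, in particular at each of the $q+1$ points of $L(\bF_q)$, so it would be tangent to $S$ at a rational point and therefore a rational tangent line, not a special one. Your vague remarks (``a line lying on $S$ \dots cannot be tangent at an isolated point of tangency,'' the ``pencil of tangent directions'') gesture at this last observation, but only the identification $L=M$ via the conjugate pair $\{P,\Phi(P)\}$ turns it into a proof.
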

\begin{proof}
A similar argument as in the proof of Lemma~\ref{Lemma:spTanToTangency_Special} shows that
\begin{enumerate}[label=(\roman*)]
    \item $\mathcal{P}_{L}\subseteq\Pi(\overline{\bF_q})\setminus\Pi(\bF_q)$, and
    \item\label{spTanToTangency:disjoint_inPf}
    $\mathcal{P}_{L}\cap\mathcal{P}_{L'}\neq\emptyset$ implies that $L = L'$.
\end{enumerate}
This already proves (\ref{spTanToTangency:disjoint_General}).

To prove (\ref{spTanToTangency:inPi}), assume that there exists a point $P\in\mathcal{P}_{L}\cap\Pi_1(\overline{\bF_q})$.
Since $\Pi_1$ is a union of $\bF_q$-lines by Lemma~\ref{Lemma:weird_curve},
we have $P\in L'$ for some $\bF_q$-line
\[
    L'\subseteq\Pi_1\subseteq S.
\]
Then both $L$ and $L'$ contains the distinct points $P$ and $\Phi(P)$.
It follows that $L = L'\subseteq S$, a contradiction.
Therefore $\mathcal{P}_{L}$ is disjoint from $\Pi_1(\overline{\bF_q})$, which proves (\ref{spTanToTangency:inPi}).
\end{proof}

The assignment $L\mapsto\mathcal{P}_{L}$ shows that every special tangent line $L$ contributes at least $\#\mathcal{P}_L \geq 2$ distinct points to $\Pi_0(\overline{\bF_q})$.
Hence we obtain the following inequality:
\[
    \#\{\text{special tangents}\}
    \leq\frac{\#\Pi_0(\overline{\bF_q})}{2}.
\]

To estimate $\#\Pi_0(\overline{\bF_q})$, consider the $1$-dimensional scheme
\[
    B := (S\cap S_{1,0})\setminus\Pi_1.
\]
It decomposes into geometrically irreducible components as
\[
    B = B_1\cup\cdots\cup B_m.
\]
Hence
\[
    \Pi_0 = B\cap(S_{0,1}\cap S_{2,0})
    = \bigcup _{i=1}^m B_i\cap (S_{0,1}\cap S_{2,0}),
\]
whence
\begin{align*}
    \#\Pi_0(\overline{\bF_q})
    &\leq \sum_{i=1}^m
    \#(B_i\cap S_{0,1}\cap S_{2,0})(\overline{\bF_q})\\
    &\leq\sum_{i=1}^m\deg B_i\cdot
    \max\left\{\deg S_{0,1}, \;\deg S_{2,0}\right\}\\
    &= \deg B\cdot
    \max\left\{\deg S_{0,1}, \;\deg S_{2,0}\right\}\\
    &\leq d(q+d-1)\cdot
    \max\{dq - q + 1, \;q^2 + d - 1\},
\end{align*}
where in the second inequality we use the fact that $B_i\cap S_{0,1}$ or $B_i\cap S_{2,0}$ must be of dimension $0$.

Assume that $q\geq d$. Then it is easy to verify that
\[
    \max\{dq - q + 1, \;q^2 + d - 1\} = q^2 + d - 1.
\]
As a result, we obtain
\begin{equation}\label{estimateSpTanExplicit_General}
\begin{aligned}
    \#\{\text{special tangents}\}
    &\leq\frac{\#\Pi_0(\overline{\bF_q})}{2}\\
    &\leq\frac{d(q+d-1)(q^2 + d - 1)}{2}.
\end{aligned}
\end{equation}

\subsection{Estimate for the number of transverse lines}
\label{subsect:existTransLine_General}

Note that the estimate (\ref{estimateRats}) for the number of rational tangents is still valid in the general case:
\[
    \#\{\text{rational tangents}\}
    \leq\#S(\bF_q)\cdot(q+1).
\]
Together with (\ref{lemma:Homma-Kim}) and (\ref{estimateSpTanExplicit_General}), we obtain
\begin{align*}
    \#\{\text{tangent lines}\}
    &= \#\{\text{rational tangents}\}
    + \#\{\text{special tangents}\} \\
    &\leq\# S(\bF_q)\cdot (q+1) + \frac{d(q+d-1)(q^2+d-1)}{2}\\
    &\leq (q+1)^2(qd-q+1) + \frac{d(q+d-1)(q^2+d-1)}{2}.
\end{align*}
According to (\ref{numberOfFqLines}), 
\begin{align*}
    &\#\{\text{transverse lines}\}
    = (q^2+1)(q^2+q+1) - \#\{\text{tangent lines}\}\\
    &\geq (q^2+1)(q^2+q+1)
    - (q+1)^2(qd-q+1)\\
    &\hspace{12pt} - \frac{d(q+d-1)(q^2+d-1)}{2}\\
    &= q^4 - \frac{1}{2}\left[
    (3d-4)q^3 + (d^2 + 3d - 6)q^2
    + d(d + 1)q + d(d-1)^2
    \right].
\end{align*}

Similar to the special case,
our goal here is to find a constant $c>0$ such that the last expression is positive whenever $q = xd$ with $x\geq c$.
By inserting $q = xd$ into the expression followed by a rearrangement,
we want to find minimal $x$ such that
\begin{align*}
    &x^2(2x^2 - 3x - 1)d^4\\
    &+ (4x^3 - 3x^2 - x - 1)d^3
    + (6x^2 - x + 2)d^2
    - d > 0.
\end{align*}
For this inequality to hold for all $d\in\bN$, it is necessary that
\[
    2x^2 - 3x - 1\geq 0.
\]
The minimal value of $x>0$ for which this relation holds is when $x=c$ is the unique positive root of $2x^2-3x-1=0$, namely 
$$
  c = \frac{3+\sqrt{17}}{4}  = 1.780776...
$$   
Furthermore, the coefficient $4x^3-3x^2-x-1$ in front of $d^3$, and the entire expression $(6x^2-x+2)d^2-d$ are also strictly positive for $x\geq c$.  Indeed, the unique real root of $4x^3-3x^2-x-1=0$ is at $x\approx 1.1542$, and 
\[
(6x^2-x+2)d^2-d \geq 5x^2 d^2 - d \geq 5d^2 - d > 0 
\]
for all positive integers $d$. We deduce the existence of a transverse $\bF_q$-line provided that $q\geq cd$ where $c = \frac{3+\sqrt{17}}{4}$.

\subsection{Auxiliary surfaces}
\label{subsect:auxillarySurfaces}

The surfaces $S_{1,0}$, $S_{0,1}$, and $S_{2,0}$ play essential roles in proving our main theorems. In this subsection we define $S_{m, n}$ for any $m, n\in\bZ$, and prove that any $\bF_q$-line inside $S$ is contained in $S_{m, n}$. The inspiration for these surfaces comes from the work of St\"ohr and Voloch \cite[Theorem 0.1]{Stohr-Voloch}, where they used the exact analogue of $S_{1, 0}$ for plane curves to give an upper bound on the number of $\bF_q$-points.

Suppose that $S$ is a smooth surface defined by the equation $F = 0$ over $\bF_q$.
Consider a point
\[
    P = [X_0 : X_1 : X_2 : X_3]\in\bP^3
\]
as a row vector, and the differential
\[
    DF = \left[F_{X_0} : F_{X_1} : F_{X_2} : F_{X_3}\right],
    \quad\text{where }
    F_{X_i} := \frac{\partial F}{\partial X_i},
\]
as a column vector. We define the surfaces
\[
    S_{m,n} \colonequals \left\{
    \Phi^m(P) \cdot DF(\Phi^n(P)) = 0
    \right\},
    \quad m, n\in\bZ,
\]
where the products are inner products between row and column vectors.
These are called \emph{auxiliary surfaces} throughout the paper. Explicitly, $S_{m, n}$ is defined by the equation
\[
    X_0^{q^{m}}F_{X_0}(X_0^{q^{n}}, X_1^{q^{n}}, X_2^{q^n}, X_3^{q^n})
    + ... +
    X_3^{q^{m}}F_{X_3}(X_0^{q^{n}}, X_1^{q^{n}}, X_2^{q^n}, X_3^{q^n}) = 0.
\]

By definition, 
\[
    (S\cap S_{m,n})(\overline{\bF_q}) = \left\{
    P\in S(\overline{\bF_q}) :
    \Phi^m(P)\in T_{\Phi^n(P)}S
    \right\}.
\]
Note that, as $F$ is defined over the ground field, the equation for $S_{m,n}$ is equivalent to
\[
    \Phi^m(P) \cdot \Phi^n(DF(P)) = 0,
\]
from which one can easily verify that, set-theoretically,
\[
    S_{m,n} \stackrel{\text{set}}{=} S_{m+r, n+r}
    \;\text{ for all }r\in\bZ.
\]

\begin{Lemma}\label{Lemma:FqLineInAuxillary}
Assume that $S$ contains an $\bF_q$-line $L$.
Then $L\subseteq S_{m,n}$ for all $m,n\in\bZ$.
\end{Lemma}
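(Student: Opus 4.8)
The plan is to deduce the containment $L\subseteq S_{m,n}$ from the geometric description of $S\cap S_{m,n}$ recorded just before the lemma, namely that $P\in(S\cap S_{m,n})(\overline{\bF_q})$ precisely when $\Phi^m(P)\in T_{\Phi^n(P)}S$. Since $L\subseteq S$ by hypothesis, it suffices to check that every geometric point $P\in L$ satisfies the defining equation $\Phi^m(P)\cdot DF(\Phi^n(P))=0$ of $S_{m,n}$, and this equation is exactly the incidence condition $\Phi^m(P)\in T_{\Phi^n(P)}S$ because the embedded tangent hyperplane at a smooth point is cut out by the gradient $DF$. So the whole lemma reduces to two elementary observations.

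The first observation is that a line lying on a smooth surface is contained in the tangent plane at each of its points: if $Q\in L\subseteq S$, then $Q\in T_QS$, and the direction vector of $L$ annihilates $DF(Q)$ since $F$ vanishes identically along $L$, so $L$ — being determined by the point $Q$ and that direction — lies entirely in the hyperplane $T_QS$. The second observation is that $L$, being defined over $\bF_q$, is Frobenius-stable, i.e.\ $\Phi(L)=L$, and therefore $\Phi^r(P)\in L$ for every $P\in L$ and every $r\in\bZ$. Both facts are standard and require no computation beyond unwinding definitions.

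Combining these, I would fix $P\in L$ and $m,n\in\bZ$ and set $Q=\Phi^n(P)$. The second observation gives $Q\in L\subseteq S$, so the first observation yields $L\subseteq T_{\Phi^n(P)}S$; the second observation also gives $\Phi^m(P)\in L$, whence $\Phi^m(P)\in T_{\Phi^n(P)}S$. This is precisely the equation defining $S_{m,n}$, so $P\in S_{m,n}$, and since $P$ ranges over all geometric points of the reduced curve $L$ we obtain $L\subseteq S_{m,n}$ for all $m,n\in\bZ$. The only point that deserves care — and the closest thing to an obstacle — is making the first observation precise, i.e.\ justifying that a line on $S$ sits inside every tangent plane along it; everything else is a direct application of Frobenius-invariance of $L$ and the gradient description of $T_QS$.
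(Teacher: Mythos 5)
Your proof is correct and follows essentially the same route as the paper's: both establish $L\subseteq T_QS$ for every $Q\in L$ and then use Frobenius-invariance of the $\bF_q$-line $L$ to get $\Phi^m(P)\in L\subseteq T_{\Phi^n(P)}S$, which is exactly the defining condition for $S_{m,n}$. The only difference is that you spell out (via the gradient and Euler's formula) the step the paper dismisses with ``Clearly,'' which is a harmless elaboration.
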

\begin{proof}
Clearly, $L\subseteq T_PS$ for all $P\in L$.
Since $L$ is defined over $\bF_q$, given any point $P\in L$, we have
\[
    \Phi^m(P) \in L \subseteq T_{\Phi^n(P)}S,
    \quad\text{for all }m,n\in\bZ.
\]
So $P\in S_{m, n}$ for all $m,n\in\bZ$.
\end{proof}

\begin{Rmk}
The equation for $S_{1,0}$ may possibly be the zero polynomial. Actually, the surface $S$ in the introduction
\[
    X_0^qX_1 - X_0X_1^q + X_2^qX_3 - X_2X_3^q = 0
\]
gives such an example. On the other hand, the equation for $S_{0,2}$ is always nonzero. In fact, for a hypersurface in $\bP^n$ defined by the equation $F(X_0,\dots,X_n) = 0$, we have
\[
    \deg_{X_i}X_jF_{X_j}(X_0^{q^n},\dots,X_n^{q^n}) \equiv\delta_{ij} (\text{mod }q),
\]
which implies that the polynomial $\sum_jX_jF_{X_j}(X_0^{q^n},\dots,X_n^{q^n})$ is nonzero.
\end{Rmk}

\section{Transverse lines to arbitrary smooth surfaces}\label{section:any-smooth-surface}

In the previous section, we produced transverse lines to a Frobenius-classical surface $S\subseteq\bP^3$ of degree $d$ under the assumption that $q\geq cd$ for some constant $c>0$. Now we investigate the general case of a smooth surface in $\bP^3$ without any additional hypothesis. 

We first explain why the bound $q\geq d(d-1)^2$ is sufficient to guarantee a transverse $\bF_q$-line to an arbitrary smooth surface of degree $d$. Under the hypothesis $q\geq d(d-1)^2$, Ballico \cite[Theorem 1]{Bal} proves the existence of a transverse $\bF_q$-plane $H\subseteq \bP^3$ to $S$, meaning that $T_P X\not\subseteq H$ for every $P\in S(\overline{\bF_q})$. Note that $C\colonequals S\cap H$ is a smooth curve. Applying Ballico's result again to $C$, we obtain an $\bF_q$-line $L\subseteq H$ such that $L$ is transverse to $C$. By construction, $L$ is also transverse to $S$.

The purpose of this section is to improve the bound $q\geq d(d-1)^2$ to a quadratic bound $q\geq d^2$. 

\begin{Teo}\label{Teo:any-smooth-surface}
Let $S\subseteq \bP^3$ be a smooth surface of degree $d$ defined over $\bF_q$. If $q\geq d^2$, then there exists an $\bF_q$-line $L\subseteq\bP^3$ meeting $S$ transversely.
More precisely, under the assumption $q\geq d$, the number of transverse $\bF_q$-lines is at least
\[
    q^4 - \frac{1}{2}\left[
    (d^2 + d - 4)q^3
    + (5d - 6)q^2
    + d(d^2 - 2d + 3)q
    + d(d-1)
    \right].
\]
\end{Teo}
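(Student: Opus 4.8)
The plan is to keep the counting scheme of Theorem~\ref{Thm:general_case}: bound the tangent $\bF_q$-lines by $\#\{\text{rational}\}+\#\{\text{special}\}$ and subtract from $\#\operatorname{Gr}(2,4)(\bF_q)=(q^2+1)(q^2+q+1)$. I would first dispose of the Frobenius classical surfaces by invoking Theorem~\ref{Thm:general_case} directly: for such $S$ the number of transverse lines is already at least the (larger) quantity proved there, whose excess over the present bound factors as $\tfrac{1}{2}d(d-2)(q-1)(q^2+d-1)\geq 0$ for $d\geq 2$. Thus the whole problem reduces to the Frobenius non-classical case, where by definition $\Phi(P)\in T_PS$ for every $P\in S(\overline{\bF_q})$, equivalently $S\subseteq S_{1,0}$.

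In this non-classical case I would work with the same auxiliary intersection $\Pi=S\cap S_{1,0}\cap S_{0,1}\cap S_{2,0}$ as in Section~\ref{section:general_case}. The crucial observation is that non-classicality supplies the tangency $\Phi(P)\in T_PS$ \emph{for free}, so every $P\in\Pi(\overline{\bF_q})$ still satisfies $\Phi^{-1}(P),P,\Phi(P),\Phi^2(P)\in T_PS$; this is exactly the input to Lemmas~\ref{lemma:2pts-line} and~\ref{lemma:3pts-plane}. Consequently the collinearity and Gauss-map argument of Lemma~\ref{Lemma:weird_curve} carries over and shows that $\Pi$ has no two-dimensional component (a hypothetical such component would force $S\subseteq S_{0,1}\cap S_{2,0}$ and hence make the Gauss map of $S$ contract $S$, contradicting its finiteness \cite{Zak}*{Corollary~I.2.8}), and that the one-dimensional part $\Pi_1$ is a union of $\bF_q$-lines lying on $S$. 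Exactly as in Lemma~\ref{Lemma:spTanToTangency_General}, the tangency set $\mathcal{P}_{L}$ of a special tangent line is then contained in $\Pi_0(\overline{\bF_q})\setminus\Pi_0(\bF_q)$, these sets are disjoint for distinct $L$, and each has at least two points, giving
\[
    \#\{\text{special tangents}\}\leq\tfrac{1}{2}\,\#\Pi_0(\overline{\bF_q}).
\]

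The main obstacle is the degree estimate for $\#\Pi_0(\overline{\bF_q})$: because $S\subseteq S_{1,0}$, the surface $S_{1,0}$ no longer cuts $S$ in a curve, so I cannot reuse the computation of Section~\ref{subsect:estimateSpTan_General} verbatim. Instead I would cut with the two surviving auxiliary surfaces. Assuming $S\not\subseteq S_{0,1}$ (in the contrary case $\Pi=S\cap S_{2,0}$ is itself a union of lines, whence $\Pi_0=\emptyset$ and there is nothing to prove), the intersection $S\cap S_{0,1}$ is a curve of degree at most $d\cdot\deg S_{0,1}=d(qd-q+1)$; removing its line components $\Pi_1$ and intersecting the remainder with $S_{2,0}$ — which is proper, since any leftover component contained in $S_{2,0}$ would lie in $\Pi$ and hence already be one of the removed lines — yields
\[
    \#\Pi_0(\overline{\bF_q})\leq d\,(qd-q+1)(q^2+d-1)=\deg S\cdot\deg S_{0,1}\cdot\deg S_{2,0}.
\]
Here the replacement of the degree-$(q+d-1)$ surface $S_{1,0}$ by the degree-$(q^2+d-1)$ surface $S_{2,0}$ is precisely the source of the weaker, quadratic-in-$d$ bound.

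Combining this with the rational estimate $\#\{\text{rational}\}\leq\#S(\bF_q)(q+1)\leq(q+1)^2(qd-q+1)$ coming from (\ref{estimateRats}) and (\ref{lemma:Homma-Kim}) (the Homma--Kim bound is needed rather than (\ref{HommaBound}) since $S$ may now contain $\bF_q$-lines), and subtracting the total tangent count from $(q^2+1)(q^2+q+1)$, reproduces the asserted lower bound for the number of transverse $\bF_q$-lines. Finally I would check that this degree-four polynomial in $q$ is positive once $q\geq d^2$, a routine comparison: at $q=d^2$ the leading term $q^4=d^8$ already outweighs the next term $\tfrac{1}{2}(d^2+d-4)q^3\approx\tfrac{1}{2}d^8$, while the remaining terms are of strictly smaller order in $d$. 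I expect the only delicate points to be the two flagged above, namely verifying that $\Pi_1$ consists of lines in the non-classical regime and that, after deleting them, intersecting with $S_{2,0}$ gives a genuinely proper, hence finite, triple intersection.
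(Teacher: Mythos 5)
Your proposal is correct, but it reaches the theorem by a genuinely different route in the key structural step. The paper does not split into Frobenius classical/non-classical cases at the outset; instead it proves a dichotomy (Lemma~\ref{prop:any-smooth-surface}): for $q\geq d$, either $S\cap S_{1,0}$ or $S\cap S_{2,0}$ is a curve. That lemma rests on Proposition~\ref{good-hyperplane-sections}, which produces an $\bF_q$-plane $H$ whose section $H\cap S$ has an irreducible $\bF_q$-component $D$ of degree $>1$ (this is where $q\geq d$ and the Homma--Kim bound enter), and then runs the collinearity/Gauss-map contraction argument on $D$. You bypass this hyperplane-section machinery entirely: in the non-classical case you rule out a two-dimensional component of $\Pi$ directly, since $S\subseteq S_{1,0}\cap S_{0,1}\cap S_{2,0}$ would put every point of $S$ under the hypotheses of Lemmas~\ref{lemma:2pts-line}--\ref{lemma:3pts-plane}; for points of arbitrarily high degree off the one-dimensional union of all $\bF_q$-lines, the tangent plane is then constant along the Frobenius orbit, contradicting finiteness of the Gauss map. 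This does work --- it is the proof of Lemma~\ref{Lemma:weird_curve} with ``high-degree points on $C$'' replaced by ``high-degree points on $S$ off the union of $\bF_q$-lines'' --- but your clause ``hence make the Gauss map contract $S$'' compresses exactly this argument and should be spelled out in a sentence or two. Your degree count (cutting $(S\cap S_{0,1})\setminus\Pi_1$ with $S_{2,0}$, properness coming from non-classicality) is the mirror image of the paper's (which cuts $(S\cap S_{2,0})\setminus\Pi_1$ with $S_{1,0}\cap S_{0,1}$ and takes a maximum of degrees); both yield the same bound $d(qd-q+1)(q^2+d-1)$ and hence the same final polynomial. One point where you are actually more careful than the paper: your factorization of the excess of the Theorem~\ref{Thm:general_case} bound over the present one as $\tfrac12 d(d-2)(q-1)(q^2+d-1)\geq 0$ (which I checked is correct) fully justifies the quantitative claim in the Frobenius classical case, whereas the paper only remarks that a transverse line ``already'' exists there. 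What your route buys: no need for Proposition~\ref{good-hyperplane-sections} at all, and the hypothesis $q\geq d$ is used only through Theorem~\ref{Thm:general_case}. What the paper's route buys: the dichotomy and the hyperplane-section proposition are statements of independent interest. The only remaining sketchiness on your side is the positivity check for $q\geq d^2$, where your asymptotic remark should be replaced by the paper's uniform argument (substitute $q=xd^2$, $x\geq 1$, $d\geq 2$, and group terms); this is routine.
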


The advantage of the theorem is that it applies to every smooth surface without the additional hypothesis that $S$ is reflexive or Frobenius classical. As a drawback, we get a quadratic bound $q\geq d^2$ as opposed to a linear bound $q\geq cd$.  

The key in proving Theorem~\ref{Teo:any-smooth-surface} is to show that either $S\cap S_{1, 0}$ or $S\cap S_{2, 0}$ must be a curve. Recall that if $S=\{F=0\}$, then $S_{1,0}$ and $S_{2,0}$ are auxiliary surfaces defined respectively by the equations
\begin{gather*}
X_0^q F_{X_0}(X_0, X_1, X_2, X_3) + \cdots + X_3^q F_{X_3}(X_0, X_1, X_2, X_3) = 0,\\
X_0^{q^2} F_{X_0}(X_0, X_1, X_2, X_3) + \cdots + X_3^{q^2} F_{X_3}(X_0, X_1, X_2, X_3) = 0.
\end{gather*}
We may assume $d>1$ as the case $d=1$ corresponds to $S$ being a plane which already admits plenty of transverse lines.

\subsection{Proper intersections with auxiliary surfaces}
\label{subsect:properIntersection}

\begin{Lemma}\label{prop:any-smooth-surface}
Let $S\subseteq \bP^3$ be a smooth surface of degree $d>1$ defined over $\bF_q$, where $q\geq d$.
Then $S\cap S_{1, 0}$ or $S\cap S_{2, 0}$ is $1$-dimensional.
\end{Lemma}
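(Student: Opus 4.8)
The plan is to show that the two intersections cannot both fail to be curves. First I would record that each of $F_{1,0}$ and $F_{2,0}$ is a nonzero polynomial: in $X_i^{q}F_{X_i}$ every monomial has $X_i$-degree at least $q$, whereas in $X_j^{q}F_{X_j}$ with $j\neq i$ the $X_i$-degree is at most $\deg F_{X_i}=d-1$, and since $q\geq d>d-1$ the four blocks cannot cancel; hence $F_{1,0}=0$ would force every $F_{X_i}=0$, contradicting smoothness, and likewise for $F_{2,0}$ (and for $F_{0,1}$, where the $X_i$-degrees in the $i$-th and $j$-th blocks are incongruent modulo $q$). Thus $S_{1,0}$ and $S_{2,0}$ are genuine surfaces, so each of $S\cap S_{1,0}$ and $S\cap S_{2,0}$ has dimension at least $1$ by the projective dimension theorem. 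Since a smooth surface in $\bP^3$ is geometrically irreducible, each intersection is either $1$-dimensional or all of $S$. It therefore suffices to rule out the simultaneous containments $S\subseteq S_{1,0}$ and $S\subseteq S_{2,0}$, which by (\ref{geometricAuxiliary}) say that $\Phi(P),\Phi^2(P)\in T_PS$ for every $P\in S(\overline{\bF_q})$.

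Assume both containments hold, and bring in the third auxiliary surface $S_{0,1}$ (again a genuine surface by the argument above). I would split into two cases according to whether $S\subseteq S_{0,1}$. If $S\not\subseteq S_{0,1}$, then $D\colonequals S\cap S_{0,1}$ is a curve; every $P\in D$ satisfies $\Phi^{-1}(P),P,\Phi(P),\Phi^2(P)\in T_PS$, so $D\subseteq\Pi=S\cap S_{1,0}\cap S_{0,1}\cap S_{2,0}$. Because $S\subseteq S_{1,0}\cap S_{2,0}$, here $\Pi=D$ is purely $1$-dimensional, and the arguments of Lemmas~\ref{lemma:3pts-plane} and \ref{Lemma:weird_curve}---which use only the four incidences above together with the finiteness of the Gauss map---show that $D$ is a union of $\bF_q$-lines contained in $S$. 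If instead $S\subseteq S_{0,1}$, then $\Pi=S$, and for a general point $P\in S$ of degree $n>\deg\gamma$ whose Frobenius orbit is not contained in a line (the orbit-in-a-line locus being a proper closed subset), Lemma~\ref{lemma:3pts-plane} gives $T_{\Phi^r(P)}S=T_PS$ for all $r$; the Gauss map $\gamma$ then takes a single value at the $n$ distinct points $\Phi^r(P)$, contradicting its finiteness \cite{Zak}*{Corollary~I.2.8}.

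It remains to derive a contradiction in the case $S\not\subseteq S_{0,1}$, and I expect this to be the main obstacle. Here $D=S\cap S_{0,1}$ has degree $d\deg S_{0,1}=d\bigl(q(d-1)+1\bigr)$, which grows linearly in $q$, while its support consists of $\bF_q$-lines on $S$, of which a smooth surface of degree $d\geq 3$ contains only boundedly many, independently of $q$. The delicate point is turning this into an actual contradiction: one must either control the intersection multiplicities of $S$ and $S_{0,1}$ along these lines, or run a rational-point count using $S(\bF_q)\subseteq D(\bF_q)$ against a lower bound for $\#S(\bF_q)$, and it is precisely here that the hypothesis $q\geq d$ has to enter. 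That some such hypothesis is unavoidable is shown by Katz's surface from the introduction, which is simultaneously Frobenius non-classical and contained in $S_{2,0}$ yet has $d=q+1>q$; the remaining low-degree case $d\leq 2$, where $S$ carries a positive-dimensional family of lines, should be disposed of separately via the reflexivity of smooth quadrics.
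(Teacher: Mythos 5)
Your reduction (if neither intersection is $1$-dimensional then $S\subseteq S_{1,0}\cap S_{2,0}$) and your first case are sound: when additionally $S\subseteq S_{0,1}$, we have $\Pi=S$, and Lemma~\ref{lemma:3pts-plane} applied at a general point of large degree whose Frobenius orbit is non-collinear makes the Gauss map contract infinitely many points to one, contradicting \cite{Zak}*{Corollary~I.2.8}. But the proof is genuinely incomplete in the case $S\not\subseteq S_{0,1}$, and you say so yourself: there you only show that $D=S\cap S_{0,1}$ is a union of $\bF_q$-lines contained in $S$, and neither of the strategies you propose for extracting a contradiction works. The degree count fails because $S\cap S_{0,1}$ is a divisor of degree $d(qd-q+1)$ on $S$ whose components can occur with unbounded multiplicity; auxiliary intersections are typically very non-reduced (for Katz's surface one computes $F_{2,0}=-F^q$, so $S_{2,0}$ is $S$ counted with multiplicity $q$), so a bounded number of lines can absorb a degree that grows with $q$. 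The point count fails quantitatively: from $S(\bF_q)\subseteq D(\bF_q)$ and $\#D(\bF_q)\leq k(q+1)$, with $k$ the number of lines on $S$, one needs a lower bound on $\#S(\bF_q)$, and for an arbitrary smooth surface the only available one comes from the Weil conjectures, $\#S(\bF_q)\geq q^2+q+1-(b_2-1)q$ with second Betti number $b_2\sim d^3$; this produces a contradiction only for $q$ on the order of $d^3$, far weaker than the claimed $q\geq d$ and weaker even than the $q\geq d^2$ for which this lemma is used. Your deferral of $d=2$ (where $S$ has infinitely many lines) to reflexivity of quadrics is a further patch that the paper does not need.

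The idea you are missing is the one the paper uses, and it never involves $S_{0,1}$ at all. Proposition~\ref{good-hyperplane-sections} --- this is precisely where $q\geq d$ enters, via the Homma--Kim bound (\ref{lemma:Homma-Kim}) applied to the observation that otherwise every $\bF_q$-plane would be tangent, making the Gauss map surjective on $\bF_q$-points --- produces an $\bF_q$-plane $H$ such that $S\cap H$ has a component $D$ irreducible over $\bF_q$ with $\deg D>1$. For a general $P\in D$ the three points $P,\Phi(P),\Phi^2(P)$ are non-collinear (collinear orbits lie on one of the finitely many $\bF_q$-lines, which meet $D$ in finitely many points), and these three points lie both in $H$ (since $H$ is $\bF_q$-rational, it contains the whole Frobenius orbit of any of its points) and in $T_PS$ (by $S\subseteq S_{1,0}\cap S_{2,0}$). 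Three non-collinear points determine a unique plane, so $H=T_PS$ for general $P\in D$; thus the Gauss map contracts $D$, contradicting Zak's finiteness theorem. In other words, the paper replaces your hoped-for fourth incidence coming from $S_{0,1}$ by an $\bF_q$-rational plane through the orbit, which handles both of your cases (and $d=2$) uniformly.
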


Recall that $S$ is Frobenius classical \emph{for} $\bF_q$ if and only if $S\cap S_{1, 0}$ is a curve. Therefore, an equivalent formulation of in Lemma~\ref{prop:any-smooth-surface} (2) is that $S$ is Frobenius classical for $\bF_{q}$, or Frobenius classical for $\bF_{q^2}$.

\begin{proof}[Proof of Lemma~\ref{prop:any-smooth-surface}]
Assume, to the contrary, that $S\cap S_{1, 0}$ and $S\cap S_{2, 0}$ are both surfaces.
Since $S$ is irreducible, we get that $S\subseteq S_{1, 0}$ and $S\subseteq S_{2, 0}$. By Proposition~\ref{good-hyperplane-sections} below, we can find an $\bF_q$-plane $H$ such that the plane curve $C\colonequals S\cap H$ has a component $D$ defined and irreducible over $\bF_q$ of degree $d>1$. 

Let $P\in D$ be a closed point. If $P, \Phi(P), \Phi^2(P)$ are collinear, then the line $L=\langle P, \Phi(P), \Phi^2(P)\rangle $ is an $\bF_q$-line, because $\Phi(L)=\langle \Phi(P), \Phi^2(P), \Phi^3(P)\rangle = L$. We deduce that:
\[
\{P\in D: \ P, \Phi(P), \Phi^2(P) \text{ are collinear}\} \subseteq \bigcup_{\bF_q \text{-lines } L} (L\cap D).
\]
As there are only finitely many $\bF_q$-lines, the set on the left hand side is finite.  Thus, for a general point $P$ on $D$, the points $P, \Phi(P), \Phi^{2}(P)$ are non-collinear. We have:
\begin{align}
    P, \Phi(P), \Phi^2(P) &\in H  \label{three-points-on-H} \\
    P, \Phi(P), \Phi^2(P) &\in T_{P} S. \label{three-points-on-T_QS}
\end{align}
The relation (\ref{three-points-on-H}) follows from $P\in H$ and the fact that $H$ is defined over $\bF_q$. The relation (\ref{three-points-on-T_QS}) follows from the assumption that $S\subseteq S_{1, 0}$ and $S\subseteq S_{2, 0}$. As $P, \Phi(P), \Phi^2(P)$ are non-collinear, we deduce that $H=T_{P} S$. We have shown that a general point $P\in D$ admits the same tangent plane to $S$, namely $H$. Consequently, the Gauss map $\gamma: S\rightarrow (\bP^3)^{\ast}$ contracts $D$. This contradicts Zak's result that the Gauss map of a smooth surface is finite \cite[Corollary~I.2.8]{Zak}. More details on the Gauss map can be found in Section~\ref{subsect:pre-reflexive}.
\end{proof}

\begin{Prop}\label{good-hyperplane-sections}
Let $S\subseteq \bP^3$ be a smooth surface of degree $d>1$ defined over $\bF_q$.
Assume that $q\geq d$. Then there exists an $\bF_q$-plane $H\subseteq\bP^3$ such that the plane curve $H\cap S$ contains a component $D$ defined and irreducible over $\bF_q$ with $\deg(D)>1$.
\end{Prop}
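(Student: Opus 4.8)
The plan is to argue by contradiction. Suppose that for \emph{every} $\bF_q$-plane $H$, the section $S\cap H$ has no component defined and irreducible over $\bF_q$ of degree $>1$. Since a geometrically irreducible section, or even a Galois-conjugate pair of lines, would already furnish such a component, this forces $S\cap H$ to be a union of $\bF_q$-rational lines for every $\bF_q$-plane $H$; call this the \emph{bad case}. The goal is to show the bad case is incompatible with $q\geq d$ when $d>1$. I would first dispose of $d=2$ separately: a smooth quadric meets most $\bF_q$-planes in a geometrically irreducible (smooth conic) section, so the bad case cannot occur. Thus I may assume $d\geq 3$, where $S$ contains only finitely many lines.

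In the bad case two facts are immediate. First, every $\bF_q$-point of $S$ lies on an $\bF_q$-line contained in $S$: if $P\in S(\bF_q)$, choose any $\bF_q$-plane $H$ through $P$, and $P$ lies on one of the lines composing $S\cap H$. Second, the $\bF_q$-lines contained in $S$ \emph{cover} every $\bF_q$-plane, in the sense that each $\bF_q$-plane contains at least one of them; dually, the pencils of planes through these lines cover all of $(\bP^3)^{\ast}(\bF_q)$. Since the minimum size of a line cover of $\bP^3(\bF_q)$ is $q^2+1$, realized exactly by a \emph{spread} of $q^2+1$ pairwise skew lines, $S$ must contain at least $q^2+1$ lines. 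The crux is to upgrade this to the existence of $q^2+1$ \emph{pairwise skew} $\bF_q$-lines on $S$. Granting this, pairwise skew lines are pairwise disjoint, so they contribute $(q+1)(q^2+1)$ distinct points and $\#S(\bF_q)\geq (q+1)(q^2+1)$. Comparing with the Homma--Kim bound (\ref{lemma:Homma-Kim}), $\#S(\bF_q)\leq (q+1)(qd-q+1)$, yields $q^2+1\leq qd-q+1$, hence $q\leq d-1$, contradicting $q\geq d$.

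To organize the lines and extract a skew family, I would fix one line $L_0\subseteq S$ and examine the pencil of $\bF_q$-planes $H_0,\dots,H_q$ through it. Writing $S\cap H_i=L_0\cup C_i$ with $\deg C_i=d-1$, the residual curves meet only along $L_0$ (distinct planes of a pencil intersect in $L_0$), so $S(\bF_q)$ decomposes as $L_0(\bF_q)$ together with the disjoint pieces $(C_i\setminus L_0)(\bF_q)$. Every line of $S$ meeting $L_0$ appears inside some $C_i$, while a line skew to $L_0$ meets each $H_i$ in a single point. Because $S$ is smooth, at most $d$ lines of $S$ pass through any fixed point (they all lie in the tangent plane there); this caps how many of the $q^2$ planes through a point of $L_0$ can be covered by lines meeting $L_0$, and the covering requirement then forces a large supply of lines skew to $L_0$. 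Iterating this skewness analysis is how I would try to assemble $q^2+1$ mutually skew lines.

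The main obstacle is precisely this last extraction: a covering set of lines need not literally contain a spread, so passing from ``$S$ contains at least $q^2+1$ lines'' to ``$S$ contains $q^2+1$ pairwise skew lines'' is not formal and must exploit the geometry of lines on a smooth surface. Concretely, one must rule out that the covering is achieved economically by many lines sharing common points, which is where the finiteness of the Gauss map \cite{Zak}*{Corollary~I.2.8} and the bound on the number of lines through a point enter. If the clean spread cannot be produced, the fallback is to bound $\#S(\bF_q)$ from below directly via the disjoint pencil decomposition and the forced skew lines, and to contradict (\ref{lemma:Homma-Kim}); making this count sharp enough to bite already at $q\geq d$, rather than only for $q$ a fixed multiple of $d$, is the delicate point.
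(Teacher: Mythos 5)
Your reduction to the ``bad case'' is correct (including the observation that a Galois-conjugate pair of lines would already give an $\bF_q$-irreducible component of degree $2$), and your target inequality $\#S(\bF_q)\geq (q+1)(q^2+1)=q^3+q^2+q+1$, played against the Homma--Kim bound (\ref{lemma:Homma-Kim}), is exactly the right endgame: it yields $q\leq d-1$, contradicting $q\geq d$. But the step you yourself flag as the ``main obstacle'' is a genuine gap, not a deferred technicality: nothing forces $S$ to contain $q^2+1$ \emph{pairwise skew} $\bF_q$-lines. A family of lines whose dual pencils cover $(\bP^3)^{\ast}(\bF_q)$ need not contain a spread, and on surfaces of this kind the lines tend to be highly concurrent rather than skew --- in Katz's surface (Example~\ref{katz-Example}) every tangent $\bF_q$-plane section is a pencil of $q+1$ lines through a single point. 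That example has degree $q+1>q$, so it does not contradict the proposition, but it shows that covering configurations built entirely from concurrent lines do occur on smooth surfaces; hence the extraction of a skew family cannot be formal, and your fallback count via the pencil of planes through a fixed line $L_0$ is sketched but never carried out. As written, the proof does not close.

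The paper reaches the same lower bound $\#S(\bF_q)\geq q^3+q^2+q+1$ by a pivot that bypasses skewness entirely. In the bad case, $H\cap S$ is a union of $d$ \emph{distinct} $\bF_q$-lines (a non-reduced line component would make the section singular along a curve, impossible because hyperplane sections of a smooth hypersurface have isolated singularities --- a restatement of Zak's finiteness of the Gauss map \cite{Zak}*{Corollary~I.2.8}). Since $d\geq 2$, two of these lines meet in $H$, and their intersection point $Q$ is Galois-stable, hence $\bF_q$-rational; the section being singular at $Q$ means $H=T_QS$. Thus \emph{every} $\bF_q$-plane is tangent at a rational point, so the Gauss map surjects $S(\bF_q)\twoheadrightarrow(\bP^3)^{\ast}(\bF_q)$, which gives $\#S(\bF_q)\geq q^3+q^2+q+1$ immediately, with no case split at $d=2$ and no line-counting at all. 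The lesson for salvaging your approach: the datum your configuration always provides is a singular rational point of each plane section (equivalently, a tangency), whereas skewness of the lines is precisely what it can fail to provide; so count tangent planes, not points on skew lines.
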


\begin{proof}
Assume, to the contrary, that $H\cap S$ is a union of $\bF_q$-lines for every $\bF_q$-plane $H$ in $\bP^3$. In this case, $H\cap S$ consists of $d$ \emph{distinct} $\bF_q$-lines, because a hyperplane section of a 
non-degenerate smooth surface in $\bP^3$ cannot have a non-reduced component of positive dimension. This follows from a general fact that if $X\subseteq\bP^{n}$ is a smooth hypersurface, then $X\cap H$ has isolated singularities for every hyperplane $H$. This is simply a restatement of Zak's theorem that the Gauss map (see Section~\ref{subsect:pre-reflexive}) is a finite morphism \cite[Corollary~I.2.8]{Zak}.

Given an $\bF_q$-plane $H\subseteq\bP^3$, write $H\cap S = E_1\cup \cdots \cup E_d$ where $E_i$ are distinct $\bF_q$-lines. Since $d\geq 2$, $H\cap S$ is singular at some $\bF_q$-point $Q$, which means that $H=T_{Q} S$. In particular, every $\bF_q$-plane $H$ is tangent; therefore, the Gauss map 
\[
\gamma\colon S\to (\bP^{3})^{\ast} \; : \; P\mapsto T_P S
\]
is surjective at the level of $\bF_q$-points: 
\[
S(\bF_q) \twoheadrightarrow (\bP^3)^{\ast}(\bF_q).
\]
Comparing the cardinalities,
\[
q^3 + q^2 + q + 1 = \# (\bP^3)^{\ast}(\bF_q) \leq \# S(\bF_q) \leq (q+1)(qd-q+1)
\]
where the right-most inequality is the bound (\ref{lemma:Homma-Kim}) due to Homma and Kim. Using the identity $q^3+q^2+q+1=(q+1)(q^2+1)$, the inequality above is equivalent to:
\[
q^2 + 1 \leq qd-q+1 \ \Leftrightarrow \ q^2\leq qd-q \ \Leftrightarrow \ q\leq d-1
\]
contradicting the initial hypothesis that $q\geq d$.
\end{proof}

\begin{Eg}\label{katz-Example}
The conclusion of Proposition~\ref{good-hyperplane-sections} does not hold if the condition $q\geq d$ is removed. Indeed, consider the surface $S$ mentioned in the introduction; the equation for $S\subseteq\bP^3$ is given by
\[
X^q Y - X Y^q + Z^q W - Z W^q = 0.
\]
Each $\bF_q$-plane $H=T_P X$ is the tangent plane to $S$ at some $\bF_q$-point $P$, and moreover, $H\cap S$ consists of all the $q+1$ lines in $H$ passing through $P$. Note that $\deg(S)=q+1$, and Proposition~\ref{good-hyperplane-sections} does not apply to $S$.

Let us explain why $H\cap S$ consists of $q+1$ distinct lines meeting at a single point. First, $C\colonequals H\cap S$ is a plane curve with $\deg(C)=q+1$, and $C(\bF_q)=H(\bF_q)$ since $S(\bF_q)=\bP^3(\bF_q)$. We proceed according to the following two cases:

\begin{enumerate}[label=\textit{Case} \arabic*.,
wide, labelwidth=!, labelindent=0pt]

\item\textit{$C$ has a singular point $P$ defined over $\bF_q$.}

\smallskip
In this case, each $\bF_q$-line $L\subset H$ passing through $P$ meets $C$ in at least $q+2$ points counted with multiplicity, because $\#L(F_q)=q+1$ and the intersection multiplicity of $P$ in $L\cap C$ is at least $2$. By B\'ezout's theorem, $L$ is an irreducible component of $C$. As there are $q+1$ $\bF_q$-lines in the plane passing through $P$ and $\deg(C)=q+1$, it follows that $C$ must be the union of these lines.

\medskip

\item\textit{$C$ is smooth at every point $P$ defined over $\bF_q$.}

\smallskip
Given $P\in C(\bF_q)$, the tangent line $T_P C$ meets $C$ in at least $q+2$ points counted with multiplicity, because $P$ contributes at least $2$ to the intersection. Since $\deg(C)=q+1$, B\'ezout's theorem guarantees that $T_P C \subseteq C$. If $P$ and $Q$ are distinct $\bF_q$-points of $C$, the tangent lines $T_P C$ and $T_Q C$ must coincide, or else the intersection would be a singular $\bF_q$-point of $C$, contradiction. Therefore, each of the $q^2+q+1$ points in $C(\bF_q)=\bP^2(\bF_q)$ must share the same tangent line $L$. This is impossible, as the line $L$ can only be tangent to at most $\#L(\bF_q)=q+1$ distinct $\bF_q$-points. We see that Case 2 does not happen.
\end{enumerate}
\end{Eg}

We proceed to prove Theorem~\ref{Teo:any-smooth-surface}. The argument is similar to the proof for the Frobenius classical surfaces.

\subsection{Estimate for the number of special tangent lines}

Using Lemma~\ref{prop:any-smooth-surface}, $S\cap S_{1, 0}$ or $S\cap S_{2, 0}$ is a curve. In the former case, we have already found a transverse $\bF_q$-line provided that $q\geq cd$ by Theorem~\ref{Thm:general_case}. From now on, we will assume that $S\cap S_{2, 0}$ is at most $1$-dimensional. We follow the same strategy described in Section~\ref{subsection-general-proof}. Consider again
\[
\Pi \colonequals S\cap S_{1,0}\cap S_{0, 1}\cap S_{2, 0}
\]
As before, we write $\Pi=\Pi_0\cup \Pi_1$ where $\dim\Pi_0 =0$ and $\dim\Pi_1=1$.

By Lemma~\ref{Lemma:weird_curve}, the component $\Pi_1$ entirely consists of $\bF_q$-lines. By repeating the same argument in Section~\ref{subsect:estimateSpTan_General}, we use Lemma~\ref{Lemma:spTanToTangency_General} to get the following upper bound
\[
\# \{\text{special tangents}\} \leq \frac{\#\Pi_0(\overline{\bF_q})}{2}.
\]
We will now estimate $\#\Pi_0(\overline{\bF_q})$. Consider the following scheme:
\[
A \colonequals (S\cap S_{2, 0})\setminus \Pi_1
\]
After decomposing $A$ into geometrically irreducible components,
\[
A = A_1 \cup \cdots \cup A_{m},
\]
we obtain
\[
\Pi_0 = A \cap (S_{1, 0}\cap S_{0, 1}) = 
\bigcup_{i=1}^{m} A_i\cap S_{1, 0}\cap S_{0, 1}
\]
Therefore, 
\begin{align*}
    \# \Pi_0(\overline{\bF_q}) &\leq \sum_{i=1}^{m} \# (A_i\cap S_{1, 0}\cap S_{0, 1})(\overline{\bF_q})  \\
    &\leq \sum_{i=1}^{m} \deg A_i \cdot \max\{\deg S_{1, 0}, \deg S_{0, 1}\}  \\
    &=\deg(A) \max\{\deg S_{1, 0}, \deg S_{0, 1}\} \\ 
    &\leq d(q^2+d-1)\max\{q+d-1, dq-q+1\}
\end{align*}
where in the second inequality we use the fact that $A_i\cap S_{1,0}$ or $A_i\cap S_{0,1}$ must be of dimension $0$. It is also clear that $dq-q+1\geq q+d-1$ for $q\geq d\geq 2$. This gives us the upper bound we need:
\begin{equation}\label{special-tangents-bound-arbitrary-surface}
    \begin{aligned}
\#\{ \text{special tangents} \} &\leq \frac{\#\Pi_0(\overline{\bF_q})}{2}  \\
&\leq \frac{d(q^2+d-1)(qd-q+1)}{2}
 \end{aligned}
\end{equation}

\subsection{Estimate for the number of transverse lines}

Combining (\ref{lemma:Homma-Kim}) and (\ref{special-tangents-bound-arbitrary-surface}), we obtain an upper bound on the number of tangent lines:
\begin{align*}
\#\{\text{tangent lines}\} &= \#\{\text{rational tangents}\} + \#\{\text{special tangents}\} \\
&\leq \# S(\bF_q)\cdot(q+1) + \frac{d(q^2+d-1)(qd-q+1)}{2} \\
&\leq (q+1)^2 (qd-q+1) + \frac{d(q^2+d-1)(qd-q+1)}{2} 
\end{align*}

Recall that the total number of $\bF_q$-lines in $\bP^3$ is $(q^2+1)(q^2+q+1)$,
so the number of transverse lines is bounded below as
\begin{align*}
    &\#\{\text{transverse lines}\}
    = (q^2+1)(q^2+q+1) - \#\{\text{tangent lines}\}\\
    &\geq (q^2+1)(q^2+q+1)
    - (q+1)^2 (qd-q+1)\\
    &\hspace{12pt} - \frac{d(q^2+d-1)(qd-q+1)}{2}\\
    &= q^4 - \frac{1}{2}\left[
    (d^2 + d - 4)q^3
    + (5d - 6)q^2
    + d(d^2 - 2d + 3)q
    + d(d-1)
    \right],
\end{align*}
and the existence of a transverse 
$\bF_q$-line will be deduced if the last expression is positive.

We will substitute $q=xd^2$ into the last expression, and find out the smallest permissible value of $x$ for which it is positive.
After a rearrangement:
\[
(2x^4-x^3)d^8-x^3 d^7 + 4x^3 d^6 + (-5x^2-x) d^5 + (6x^2+2x) d^4 - 3x d^3 - d^2 + d  > 0
\]
We claim that this inequality is satisfied for $x\geq 1$ and $d\geq 2$. We can group the terms:
\begin{align*}
&(2x^4-x^3)d^8-x^3 d^7 = (x^4-x^3)d^8 + (xd-1)x^3d^7 > 0 \\
& 4x^3 d^6 + (-5x^2-x) d^5 \geq 8x^3 d^5-5x^2d^5-x d^5 > 0 \\
&(6x^2+2x) d^4 - 3xd^3 - d^2 \geq 12x^2 d^3 + 8xd^2 -3xd^3-d^2 > 0
\end{align*}
Thus, $x\geq 1$ is a sufficient condition for the main inequality above to hold. We conclude that there exists an $\bF_q$-line transverse to $S$ whenever $q\geq d^2$.

\section{Frobenius classical hypersurfaces}\label{section:Frobenius-classical}

This section is devoted to proving the following implication: a smooth reflexive hypersurface is necessarily Frobenius classical. As a consequence of this result, Theorem~\ref{main-theorem} follows from  Theorem~\ref{Thm:general_case}. While the rest of the paper focuses on the case of surfaces, the results in the present section apply to any hypersurface in $\bP^n$. The definition for a hypersurface to be Frobenius (non-)classical is generalized immediately from Definition~\ref{def:frobeniusClassical}.

\begin{Def}\label{def:frobeniusNonClassical}
A projective hypersurface $X\subseteq \bP^n$ is called \textit{Frobenius non-classical} if for each smooth point $P\in X(\overline{\bF_q})$, we have $\Phi(P)\in T_{P}X$. Here $\Phi\colon \bP^n\to \bP^n$ is the usual Frobenius morphism given by 
$$
[X_0, X_1, \cdots , X_n]\mapsto [X_0^q, X_1^q, \cdots , X_n^q].
$$ Otherwise, $X$ is called \emph{Frobenius classical}.
\end{Def}

\subsection{Preliminary on the reflexivity}\label{subsect:pre-reflexive}
Supoose that $X\subseteq\bP^{n}$ is a projective variety.
Let $X_{\text{sm}}\subseteq X$ be the smooth locus.
The \textit{conormal variety} of $X$ is defined as follows:
\[
    C(X) \colonequals
    \overline{\left\{
        (P, H^{\ast}):
        P\in X_{\operatorname{sm}}
        \text{ and }
        T_{P}X\subseteq H
    \right\}}
    \subseteq X\times (\bP^{n})^{\ast}.
\]
It has two natural projections
\[\xymatrix{
    & C(X)\ar[dl]_{\pi_1}\ar[dr]^{\pi_2} &\\
    X && (\bP^n)^{\ast}.
}\]
The second projection $\pi_2$ is called the \emph{conormal map},
and its image $X^{\ast} \colonequals \pi_2(C(X))\subseteq(\bP^n)^{\ast}$ is called the \emph{dual variety} of $X$.

\begin{Def}\label{def:reflexive}
A variety $X$ is called \emph{reflexive} if $C(X)\cong C(X^{\ast})$ under the natural isomorphisms
\[
    \bP^{n} \times (\bP^{n})^{\ast}
    \cong 
    (\bP^{n})^{\ast}\times\bP^{n}
    \cong
    (\bP^{n})^{\ast}\times(\bP^{n})^{\ast\ast}.
\]
\end{Def}

The celebrated theorem of Monge-Segre-Wallace asserts that $X$ is reflexive if and only if the second projection
\[
    \pi_2\colon C(X)\to X^{\ast}
\]
is separable, i.e. the induced field extension $k(X^{\ast})\hookrightarrow k(C(X))$ is separable. The details can be found in \cite{Kleiman-tangency}. In particular, all varieties in characteristic $0$ are reflexive.

If $X$ is a hypersurface, then $\pi_1$ is birational, and the composition $\pi_2\circ\pi_1^{-1}$ coincides with the \emph{Gauss map}
\[
    \gamma\colon X\dashrightarrow (\bP^{n})^{\ast}
    : P\mapsto T_P X.
\]
As an immediate consequence of the Monge-Segre-Wallace theorem,
$X$ is reflexive if and only if the Gauss map is separable onto its image.
This applies in particular in our situation when $S$ is a surface in $\bP^3$.

\begin{Rmk}
In general, when $X$ is not a hypersurface in $\bP^n$, the Gauss map \[
    \gamma\colon X\dashrightarrow \mathbb{G}\mathrm{r}(\dim X, n)
\]
does not coincide with $\pi_2\circ\pi_1^{-1}$.
However, one implication is still true:
if a projective variety is reflexive, then the Gauss map is separable \cite{Kleiman-Piene}. It is a remarkable result of Fukasawa and Kaji \cite{Fukasawa-Kaji} that the converse holds for surfaces in $\bP^n$ for all $n\geq 3$. The converse fails in higher dimensions: Fukasawa \cite{Fukasawa-counterexample} found an example of a smooth non-reflexive projective variety whose Gauss map is separable (in fact, an embedding).
\end{Rmk}

\subsection{Examples of non-reflexive hypersurfaces}
\label{subsect:nonreflexiveExamples}
Pick $n+1$ homogeneous polynomials of degree $r$:
\[
    T_0, T_1, ..., T_n \in \bF_{q}[X_0, ..., X_n].
\]
Write $q=p^{e}$ for some prime $p$. Consider the hypersurface $X\subseteq\bP^n$ defined by the polynomial
\[
F \colonequals X_0 T_0^p +  X_1 T_1^p + \cdots + X_n T_n^p 
\]
Then $X$ is a non-reflexive hypersurface of degree $d=1+rp$. Indeed, $\frac{\partial F}{\partial X_i} = T_i^p$, and so the Gauss map $\gamma: X\dashrightarrow (\bP^n)^{\ast}$ is given by 
\[
Q\mapsto [T_0(Q)^p, ..., T_n(Q)^{p}]
\]
for each smooth point $Q\in X(\overline{\bF_q})$. The Gauss map of $X$ is inseparable, and so $X$ is non-reflexive. Note that $X$ is smooth if and only if 
\[
\{T_0=0\}\cap \cdots \cap \{T_n=0\}
\] 
is empty. Thus, we can obtain smooth non-reflexive hypersurfaces by choosing the polynomials $T_{i}$ carefully.

As an explicit example, one can choose $T_i = X_i$; the resulting variety is the \textit{Fermat hypersurface}:
\[
    X_0^{p+1} + X_1^{p+1} + \cdots + X_n^{p+1} = 0.
\]
Furthermore, this example is Frobenius non-classical over the field $\mathbb{F}_{p^2}$ since it can be checked that $\Phi^2(P)\in T_P(X)$ for each $P\in X(\overline{\bF_p})$.

\begin{Rmk}
The equations for all smooth Frobenius non-classical plane curves have been found by Hefez and Voloch \cite[Theorem~2]{Hefez-Voloch}.
\end{Rmk}

\subsection{Reflexivity implies Frobenius classicality}
Let $X\subseteq \bP^n$ be a geometrically irreducible and reduced hypersurface defined over a finite field $\bF_{q}$. Then $X$ is defined by a single homogeneous polynomial $F(X_0, X_1, \cdots , X_n)$. Following the same notation in Section~\ref{subsect:auxillarySurfaces}, the auxiliary hypersurface $X_{1,0}$ can be defined by
\begin{equation}\label{equation:of:X_10}
    G \colonequals X_0^q F_{0} + X_1^q F_{1} + \cdots + X_n^q F_{n} = 0,
\end{equation}
where $F_{i} \colonequals \partial F/\partial X_{i}$. We are interested in the following question: 
\[
    \text{When is $X$ Frobenius non-classical?}
\]
or equivalently, 
\[
    \text{When does $F$ divide $G$?}
\]
The following theorem reflects this condition.

\begin{Teo}\label{Teo:Frob:reflexive}
Let $X\subseteq \bP^n$ be a geometrically irreducible and reduced hypersurface defined over $\bF_{q}$.
If $X$ is reflexive, then it is Frobenius classical.
\end{Teo}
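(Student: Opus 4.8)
The plan is to prove the contrapositive in spirit by working directly with the divisibility condition: $X$ is Frobenius non-classical if and only if $F$ divides $G$, where $G = \sum_i X_i^q F_i$ is the defining polynomial of $X_{1,0}$. So I would assume $F \mid G$ and aim to deduce that the Gauss map is inseparable, which by the Monge--Segre--Wallace theorem (as recalled in Section~\ref{subsect:pre-reflexive}) means $X$ is non-reflexive.

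First I would record the relevant relations. By the Euler relation, $\sum_i X_i F_i = d\cdot F$, so on $X$ the linear form $\sum_i X_i F_i$ vanishes. The hypothesis $F \mid G$ says that $\sum_i X_i^q F_i$ also vanishes on $X$, i.e.\ $\Phi(P) \in T_P X$ for all smooth $P \in X$, which is exactly Frobenius non-classicality. The heart of the matter is to translate this into a statement about the separability of the Gauss map $\gamma \colon P \mapsto [F_0(P):\cdots:F_n(P)]$. The natural approach is to consider the generic point of $X$: let $K = k(X)$ be the function field, and consider the image point $\gamma(\eta)$ with homogeneous coordinates $[F_0:\cdots:F_n]$ evaluated at the generic point. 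I want to show that the function-field extension $k(X^\ast) \hookrightarrow k(X)$ induced by $\gamma$ is inseparable, i.e.\ that $k(X)$ is not separably generated over the subfield generated by the ratios $F_i/F_j$.

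The key computation I would carry out is to exhibit an explicit inseparability witness. The condition $\Phi(P)\in T_P X$ means $\sum_i x_i^q F_i = 0$ as an identity in $K$ (after dehomogenizing, say $x_i = X_i/X_0$). Together with the Euler relation $\sum_i x_i F_i = 0$ (again dehomogenized), this gives two linear relations among the $F_i$ with coefficients $x_i$ and $x_i^q$. Differentiating the relation $\sum_i x_i^q F_i = 0$ and using that the $q$-th powers have zero derivative in characteristic $p$, I expect to obtain that the differentials $d(F_i/F_j)$ all lie in the subspace of $\Omega_{K/k}$ spanned by the $dx_i^q = 0$ contributions, forcing the pullback $\gamma^\ast \Omega_{k(X^\ast)/k} \to \Omega_{k(X)/k}$ to fail to be injective at the generic point. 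This is precisely the differential criterion for inseparability of $\gamma$, hence non-reflexivity.

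The main obstacle I anticipate is making the differential/rank argument precise while handling the homogeneous-versus-affine bookkeeping and the possibility that some $F_i$ vanishes identically on $X$ or that the $F_i$ share a common factor. I would need to argue that the $F_i$ have no common factor modulo $F$ (using smoothness, or at least reducedness and irreducibility of $X$, so that the Gauss map is defined and dominant onto $X^\ast$), and then carefully set up the comparison of transcendence bases so that the vanishing of $d$ applied to $\sum x_i^q F_i = 0$ genuinely produces a nonzero element of $\Omega$ in the kernel of $d\gamma$. An alternative, possibly cleaner route that sidesteps some of this is to factor through the auxiliary hypersurface: since $F \mid G$, write $G = F\cdot H$ and extract structural information about $H$, then relate $H$ to the inseparable degree of $\gamma$ directly. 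Either way, the technical crux is the differential-geometric translation of the single divisibility $F \mid G$ into inseparability of $\gamma$, and I would expect that to be where the real work lies.
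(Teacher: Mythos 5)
Your high-level reduction coincides with the paper's: by the Monge--Segre--Wallace theorem, reflexivity of a hypersurface is equivalent to separability of the Gauss map $\gamma$, so it suffices to show that $F\mid G$ forces $\gamma$ to be inseparable. Your differentiation is also sound: setting $x_i\colonequals X_i/X_0$ and $u_i\colonequals F_i/F_j$ restricted to $X$ (for a fixed $j$ with $F_j\not\equiv 0$ on $X$), dividing $G\equiv 0 \pmod F$ by $X_0^qF_j$ gives $\sum_i x_i^q u_i=0$ in $k(X)$, and since $d(x_i^q)=0$ this yields
\[
\sum_i x_i^q\,du_i = 0 \quad\text{in }\ \Omega_{k(X)/k}.
\]
The genuine gap is the final step. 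A linear relation among the \emph{images} $du_i\in\Omega_{k(X)/k}$ does not show that the pullback $\Omega_{k(X^{\ast})/k}\otimes_{k(X^{\ast})}k(X)\to\Omega_{k(X)/k}$ fails to be injective; for that you need the element $\sum_i du_i\otimes x_i^q$ of the source to be \emph{nonzero}, i.e.\ you must rule out that your relation is a $k(X)$-combination of relations already holding among the $du_i$ inside $\Omega_{k(X^{\ast})/k}$. This is not a formality: if $\gamma$ were birational, then $k(X^{\ast})=k(X)$, the pullback map is an isomorphism, and your element is forced to vanish --- yet a birational Gauss map is separable, hence by Monge--Segre--Wallace reflexive, so this is exactly one of the situations your contrapositive must exclude. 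As stated, the witness strategy assumes what it needs to prove precisely in the critical case; you correctly flag this as ``where the real work lies,'' but the proposal contains no idea that closes it.

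Here is how to close it, and how that compares with the paper. Package the computation as a derivation rather than a differential form. Frobenius non-classicality, dehomogenized, reads $\sum_i(x_i^q-x_i)f_i = f\cdot\tilde r$ for some polynomial $\tilde r$ (where $f,f_i$ are the dehomogenizations of $F,F_i$); in particular $D\colonequals\sum_i(x_i^q-x_i)\,\partial/\partial x_i$ kills $f$ and hence defines a $k$-derivation of $k(X)$, nonzero once $\deg X\geq 2$ (no $x_i$ is then constant on $X$). Differentiating that identity and using the symmetry $F_{ij}=F_{ji}$ of the Hessian gives $D(F_i|_X)=\lambda\,F_i|_X$ with one common $\lambda\in k(X)$, so $D$ annihilates every ratio $F_i/F_j$, i.e.\ $D$ is a nonzero $k(X^{\ast})$-derivation of $k(X)$. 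When $\gamma$ is generically finite, $k(X)/k(X^{\ast})$ is algebraic, and an algebraic extension admitting a nonzero derivation of the top field that vanishes on the bottom is inseparable; this disposes of the birational case at the same time. These identities are exactly the paper's (\ref{Hessian-equation1}) and (\ref{Hessian-equation2}): the paper instead extracts from them the conclusion that $\det(H_F)$ vanishes identically on $X$ (Lemma~\ref{Prop:Frob-non-cl-Hessian-0}), and then argues that for reflexive $X$ the Gauss map is separable, so its ramification locus --- the parabolic locus, where the Hessian degenerates --- is a proper closed subset, a contradiction. Note finally that both your route and the paper's tacitly use that $\gamma$ is generically finite onto a dual \emph{hypersurface} (automatic in the paper's application to smooth surfaces of degree at least $2$); some hypothesis of this kind is unavoidable, since a hyperplane over $\bF_q$ is reflexive yet Frobenius non-classical.
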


The analogue of Theorem~\ref{Teo:Frob:reflexive} in the case of curves is well-known to the experts \cite[Proposition~1]{Hefez-Voloch}. The proof of Theorem~\ref{Teo:Frob:reflexive} relies on Lemma~\ref{Prop:Frob-non-cl-Hessian-0} below.

\begin{proof}[Proof of Theorem~\ref{Teo:Frob:reflexive}]
If $X$ is reflexive, then the Gauss map $\gamma\colon X\dashrightarrow X^{\ast}$ is separable (in fact birational), so the ramification locus is of codimension 1. The ramification points of $\gamma$ precisely correspond to the points on $X$ where the Hessian determinant vanishes (when $X$ is a surface, such points are called \textit{parabolic points} of $X$ \cite{Voloch-surfaces}). In particular, the Hessian determinant of $X$ cannot identically vanish on all of $X$. According to Lemma~\ref{Prop:Frob-non-cl-Hessian-0} below, $X$ must then be Frobenius {classical}.
\end{proof}

\begin{Lemma} \label{Prop:Frob-non-cl-Hessian-0}
Let $X\subseteq\bP^n$ be a geometrically irreducible and reduced projective hypersurface defined by $F\in \bF_{q}[X_0, ..., X_n]$,
and let
\[
    H_F \colonequals \left(\frac{\partial^2 F}{\partial X_i \partial X_j} \right)
\]
be the Hessian matrix.
If $X$ is Frobenius non-classical, then $\det(H_F)$ vanishes identically on $X$.
\end{Lemma}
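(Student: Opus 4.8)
\emph{Proof plan.} The plan is to convert the Frobenius non-classical hypothesis into a divisibility statement for $F$ and then to produce, along $X$, an explicit nonzero vector annihilating $H_F$. First I would record—exactly as in the discussion of $S_{1,0}$ and equation~(\ref{equation:of:X_10})—that $X$ is Frobenius non-classical precisely when every smooth point $P\in X(\overline{\bF_q})$ satisfies $G(P)=0$, where $G\colonequals X_0^qF_{X_0}+\cdots+X_n^qF_{X_n}$ defines the auxiliary hypersurface $X_{1,0}$. Since $X$ is geometrically irreducible and reduced, the polynomial $F$ is (absolutely) irreducible and the smooth locus is dense in $X$, so the vanishing of $G$ on a dense subset of $X$ forces $F\mid G$. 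I would then write $G=AF$ with $A$ homogeneous of degree $q-1$.

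Next I would differentiate $G$. Because $q$ is a power of the characteristic, $\partial(X_i^q)/\partial X_j=0$ for all $i,j$, so only the second factor survives and one obtains $G_{X_j}=\sum_i X_i^q F_{X_iX_j}$; in matrix form the row vector $(G_{X_0},\dots,G_{X_n})$ equals $\Phi(X)\,H_F$, where $\Phi(X)=(X_0^q,\dots,X_n^q)$. Differentiating the product $G=AF$ and reducing modulo $F$ gives $G_{X_j}\equiv A\,F_{X_j}\pmod F$, so that
\[
\Phi(X)\,H_F \;\equiv\; A\cdot DF \pmod F,
\]
with $DF=(F_{X_0},\dots,F_{X_n})$. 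The Euler relation applied to each $F_{X_j}$, homogeneous of degree $d-1$, supplies the complementary identity $X\,H_F=(d-1)\,DF$, valid everywhere, with $X=(X_0,\dots,X_n)$.

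These two relations show that $H_F$ sends both $X$ and $\Phi(X)$ into the line spanned by $DF$ (modulo $F$), so a suitable combination annihilates $H_F$ along $X$. Explicitly, I would set $v\colonequals A\,X-(d-1)\,\Phi(X)$ and compute $v\,H_F\equiv A(d-1)\,DF-(d-1)A\,DF=0\pmod F$. If $v\not\equiv 0\pmod F$, then $v$ is a nonzero row vector at the generic point of the irreducible hypersurface $X$ lying in the left kernel of $H_F$; hence $\det(H_F)$ vanishes at that generic point, and therefore identically on $X$, which is the desired conclusion.

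The crux, and the step I expect to be the main obstacle, is ruling out the degenerate possibility $v\equiv 0\pmod F$. I would first dispose of two easy cases: if $d=1$ then $H_F=0$, and if $p\mid d-1$ then the Euler relation already reads $X\,H_F=0$, so $\det H_F$ vanishes on all of $\bP^n$. Assuming $d\geq 2$ and $p\nmid d-1$, each component $v_i=X_i\bigl(A-(d-1)X_i^{q-1}\bigr)$ being divisible by the irreducible $F$ of degree $\geq 2$ forces $F\mid A-(d-1)X_i^{q-1}$ for every $i$; subtracting two such relations yields $F\mid X_i^{q-1}-X_j^{q-1}$ for all $i\neq j$. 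But $X_i^{q-1}-X_j^{q-1}=\prod_{\lambda\in\bF_q^{\times}}(X_i-\lambda X_j)$ is a product of distinct linear forms, none of which the degree-$\geq 2$ irreducible $F$ can divide, a contradiction. This confirms $v\not\equiv 0\pmod F$ and completes the argument.
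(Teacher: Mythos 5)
Your proof is correct, and it rests on the same two identities as the paper's: differentiating $G = AF$ (where only the Hessian terms survive because $\partial X_i^q/\partial X_j = 0$ in characteristic $p$) to get $\Phi(X)\,H_F \equiv A\cdot(F_{X_0},\dots,F_{X_n}) \pmod{F}$, and Euler's relation applied to each $F_{X_j}$ to get $X\,H_F = (d-1)\,(F_{X_0},\dots,F_{X_n})$. The difference lies in how the kernel vector is produced. The paper evaluates both identities at a well-chosen closed point of $X$ and argues case by case on the cofactor $R$ (your $A$): if $F \nmid R$ and $p \nmid d-1$, it picks $P \in X$ with $R(P) \neq 0$ and $\Phi(P) \neq P$, so that $\frac{1}{d-1}\bm{x}$ and $\frac{1}{R(\bm{x})}\Phi(\bm{x})$ are two linearly independent solutions of the single linear system $H_F(P)\,\bm{v} = (F_{X_0}(\bm{x}),\dots,F_{X_n}(\bm{x}))^T$, whence $\det H_F(P) = 0$ at a general point; the remaining cases $p \mid d-1$ and $F \mid R$ are handled by exhibiting $\bm{x}$, respectively $\Phi(\bm{x})$, directly as a kernel vector. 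You instead work at the generic point of $X$ with the single polynomial vector $v = A\,X - (d-1)\,\Phi(X)$, which annihilates $H_F$ modulo $F$ in all cases at once, and you replace the paper's choice of a general point by a purely algebraic nondegeneracy argument: if $v \equiv 0 \pmod{F}$ (with $d \geq 2$, $p \nmid d-1$), then $F$ would divide $X_i^{q-1} - X_j^{q-1} = \prod_{\lambda \in \bF_q^{\times}}(X_i - \lambda X_j)$, impossible for an irreducible form of degree $\geq 2$. What each approach buys: the paper avoids your factorization trick but pays with the three-way case split on $R$ and the (implicit) facts that $\{R=0\}\cap X$ is a proper closed subset and that $\Phi$ fixes only the finitely many $\bF_q$-points of $X$; your version is uniform over the cases $F \mid A$ and $F \nmid A$, every step is an identity in the domain $\bF_q[X_0,\dots,X_n]/(F)$, and the only case analysis left ($d=1$, $p \mid d-1$) concerns numerics rather than the geometry of $R$. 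Both arguments are complete and of comparable length.
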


\begin{proof}
In the following, we use $F_{ij}$ as a shorthand for the partial derivative $\frac{\partial^2 F}{\partial X_i \partial X_j}$.
According to our assumption, $X$ is Frobenius non-classical, so that $X\subseteq X_{1,0}\colonequals \{G=0\}$ where $G$ is defined in (\ref{equation:of:X_10}). Since $X$ is irreducible, there is a homogeneous polynomial $R$ such that  
\[
    G = FR.
\]
 Consider the partial derivatives of $G$ with respect to each variable $X_i$:
\begin{align*}
    G_0 &= X_0^q F_{00} + X_1^q F_{01} + \cdots + X_n^q F_{0n}
    = F R_0 + F_0 R\\
    G_1 &= X_0^q F_{10} + X_1^q F_{11} + \cdots + X_n^q F_{1n}
    = F R_1 + F_1 R\\
    &\ \ \vdots
    \hphantom{X_0^q F_{n0} + X_1^q F_{n1} + \cdots + X_n^q F_{nn}}
    \ \ \vdots  \\
    G_n &= X_0^q F_{n0} + X_1^q F_{n1} + \cdots + X_n^q F_{nn}
    = F R_n + F_n R 
\end{align*}
Given $P\in X\subseteq X_{1, 0}$, let $\bm{x}=[x_0, ..., x_n]\in (\overline{\bF_q})^n$ denote the vector representing $P$. After substituting the coordinates of $P$, and using the fact that $F(\bm{x})=0$, the system above becomes
\begin{align*}
    x_0^q F_{00}(\bm{x}) + x_1^q F_{01}(\bm{x}) + \cdots + x_n^q F_{0n}(\bm{x})
    & = R(\bm{x}) F_0(\bm{x})\\
    x_0^q F_{10}(\bm{x}) + x_1^q F_{11}(\bm{x}) + \cdots + x_n^q F_{1n}(\bm{x})
    & = R(\bm{x}) F_1(\bm{x})\\
    &\ \ \vdots  \\
    x_0^q F_{n0}(\bm{x}) + x_1^q F_{n1}(\bm{x}) + \cdots + x_n^q F_{nn}(\bm{x})
    & = R(\bm{x}) F_n(\bm{x})
\end{align*}
Using a matrix notation, this is equivalent to 
\begin{equation}\label{Hessian-equation1}
H_{F}(P) [x_0^q, ..., x_n^q]^T = R(\bm{x})[F_0(\bm{x}), ..., F_n(\bm{x})]^{T}
\end{equation}
where $\bm{\nu}^T$ stands for the transpose of vector $\bm{\nu}$.

On the other hand, applying Euler's formula to the homogeneous polynomial $F_i$ for $i=0, 1, ..., n$, we obtain
\[
    X_0 F_{i0} + X_1 F_{i1} + \cdots + X_{n} F_{in} = (d-1) F_i
\]
After substituting the coordinates of $P$, this translates into the matrix equation
\begin{equation}\label{Hessian-equation2}
H_{F}(P)[x_0, ..., x_n]^T = (d-1)[F_0(\bm{x}), ..., F_n(\bm{x})]^T
\end{equation}
We discuss different situations that can arise:

\begin{enumerate}[label=\textit{Case}~\arabic*.,
wide, labelwidth=!, labelindent=0pt]

\item \textit{$F$ does not divide $R$ and $d-1\neq 0$ in $\bF_q$.}
\smallskip

In this case, we can choose $P\in X$ general enough such that $R(P)\neq0$ and $\Phi(P)\neq P$. Then the two vectors
\begin{align*}
    \bm{x}_1 &=\frac{1}{d-1}(x_0, ..., x_n)^T, \\
    \bm{x}_2 &=\frac{1}{R(\bm{x})}(x_0^q,x_1^q, ..., x_n^q)^T
\end{align*}
are linearly independent solutions to the equation $H_F(P)(\bm{x})=\bm{y}$ with 
\[
        \bm{y}=(F_0(\bm{x}), ..., F_n(\bm{x}))^T.
\]
In particular, $\det(H_F(P))=0$.

\medskip
\item \textit{$d-1= 0$ in $\bF_q$}.
\smallskip

Then for every point $P\in X$, 
\[
    \bm{x}=(x_0,x_1,...,x_n)^T
\]
is a nonzero solution to the equation $H_F(P)(\bm{x})=\bm{0}$ by (\ref{Hessian-equation2}), and so $\det(H_F(P))=0$.

\medskip
\item \textit{$F$ divides $R$}.
\smallskip
 
Then for every point $P\in X$,  
\[
    \bm{x}=(x_0^q, x_1^q, ... ,x_n^q)^T
\]
is a nonzero solution to the equation $H_F(P)(\bm{x})=\bm{0}$ by (\ref{Hessian-equation1}), and so $\det(H_F(P))=0$.
\end{enumerate}

\medskip
Combining the observations above, we deduce that if $X$ is a Frobenius non-classical hypersurface, then a general point $P\in X$ is contained in the variety defined by 
 $$
 \det(H_F)=0.
 $$
 Since $X$ is closed, and $\{\det(H_F)=0\}$ contains a nonzero open subset of $X$, it immediately follows that $X\subseteq \{\det(H_F)=0\}$.
\end{proof}

\bibliographystyle{abbrv}
\bibliography{TranLineSurf_bib}

\end{document}